\newcommand{\ca}{\mathcal} 
\newcommand{\Hom}{\ensuremath{\mathrm{Hom}}}
\newcommand{\Comod}{\ensuremath{\mathbf{Comod}}}
\newcommand{\Mod}{\ensuremath{\mathbf{Mod}}}
\newcommand{\Alg}{\ensuremath{\mathbf{Alg}}}
\newcommand{\Coalg}{\ensuremath{\mathbf{Coalg}}}
\DeclareMathOperator*\colim{colim}
\numberwithin{equation}{section}
\theoremstyle{plain}
\newtheorem{prop}{Proposition}[section]
\theoremstyle{remark}
\newtheorem{rmk}{Remark}[section]
\theoremstyle{plain}
\newtheorem{lem}{Lemma}[section]	
\theoremstyle{plain}
\newtheorem{cor}{Corollary}[section]
\theoremstyle{plain}
\newtheorem{thm}{Theorem}[section]
\begin{document} 

\title{Enrichment of Categories of Algebras
and Modules}
\author{Christina Vasilakopoulou}
\email{C.Vasilakopoulou@dpmms.cam.ac.uk}

\maketitle

\vspace{-4ex}
\begin{center}
\emph{\footnotesize Department of Pure Mathematics and Mathematical Statistics\\
University of Cambridge}
\end{center}

\begin{abstract} 

We study the universal measuring coalgebras $P(A,B)$ of Sweedler and the
universal measuring comodules $Q(M,N)$ of Batchelor. We show that these
universal objects exist in a very general context. We provide a detailed 
proof of an observation of Wraith, that the $P(A,B)$ are the 
hom-objects of an enrichment of algebras in coalgebras. 
We show also that the $Q(M,N)$ provide an enrichment of the global 
category of modules in the global category of comodules.

\end{abstract}

\section{Introduction}\label{intro}

The notion of the universal measuring coalgebra $P(A,B)$ for algebras $A$ 
and $B$ was first introduced by Sweedler in \cite{Sweedler}. 
The elements of $P(A,B)$ can be thought of
as generalized maps from $A$ to $B$. Examples of this point of view are 
given by Marjorie Batchelor in \cite{MR1793011}. In the early 1970's,
Gavin Wraith suggested that the coalgebra $P(A,B)$ can be used to give
an enrichment of the category of algebras in the category of coalgebras. In this 
paper, we work through this fact in detail, since there is no treatment
of Wraith's idea in the literature. We also extend his idea to
give an enrichment of the global category of modules $\Mod$ in the 
global category of comodules $\Comod$, using the measuring comodules, 
introduced by Batchelor in \cite{MR1793011}.

This paper is motivated by an issue concerning the two classic notions
of enrichment and fibrations. There is no evident notion of 
fibration in the enriched setting. Here we consider the well-known fibration
of the global category of modules $\Mod$ over the category of $R$-algebras $\Alg_R$
for $R$ a commutative ring, and alongside it the opfibration of the category of 
comodules $\Comod$ over the category of $R$-coalgebras $\Coalg_R$. 
Since $\Alg_R$ and $\Mod$ are enriched in $\Coalg_R$ and $\Comod$ respectively, 
we would like to see this situation as some kind of \emph{enriched fibration}.  

Section \ref{background} gives the background for the development of the following sections.
The basic facts about categories of monoids/comonoids and 
modules/comodules in monoidal categories are presented, with particular
emphasis on the symmetric monoidal category $\Mod_R$ for a commutative ring $R$. 
Additionally, we discuss locally presentable categories which are a 
useful context in which to frame later constructions. 
General references on these subjects for our 
purposes are \cite{Sweedler}, \cite{Quantum}, \cite{HopfAlg} and \cite{MR1294136}.  
Finally, we recall parts of the theory of the action of a monoidal category 
$\ca{V}$ on an ordinary category $\ca{D}$, 
which lead to an enrichment of $\ca{D}$ in $\ca{V}$,
as in \cite{MR1897810}. This is actually a special case 
of the more general result discussed in \cite{enrthrvar}, 
that there is an equivalence 
between the 2-category of tensored $\ca{W}$-categories and 
the 2-category of $\ca{W}$-representations, for $\ca{W}$ a 
right-closed bicategory.

In Section \ref{existmeascoal}, we consider the existence of 
the universal measuring coalgebra. 
The question that motivated the definition 
of measuring coalgebras is under which 
conditions, for $A,B$ $k$-algebras and $C$ a 
$k$-coalgebra ($k$ a field), the linear map 
$\rho\in\Hom(A,\Hom(C,B))$ corresponding 
under the usual tensor-hom adjunction to 
$\sigma\in \Hom(C\otimes A,B)$ in $\mathbf{Vect_k}$,
is actually an algebra map. In \cite{Sweedler}, 
Sweedler defines what it means for 
a linear map $C\otimes A\xrightarrow{\sigma}B$ to 
\emph{measure}, and so gives a category 
of measuring coalgebras $(\sigma,C)$; he also gives a 
concrete construction of a terminal object $P(A,B)$ 
in this category, 
defined by the natural bijections
\begin{displaymath}
\Alg(A,\Hom(C,B))\cong
\{\sigma\in\Hom(C\otimes A,B)|\sigma 
\,\ \mathrm{measures}\}\cong
\Coalg(C,P(A,B)),
\end{displaymath} 
called the \emph{universal measuring
coalgebra}. We identify the more 
general categorical ideas underlying this development,
and prove the existence of the object $P(A,B)$ in a
broader context. In particular, our construction covers the
case of $\Mod_R$ for 
a commutative ring $R$. The class of 
\emph{admissible} monoidal categories and results from 
\cite{MR2500049} and \cite{MR2529129} play an important role
in this process, and in particular the 
natural isomorphism defining the object $P(A,B)$ in 
$\Coalg_R$ is also provided by \cite[Proposition 4]{MR2500049}.  

In Section \ref{enrichmalgscoalgs}, we combine the results of the two previous sections
and we prove in our general context the result of Wraith,
that there is an enrichment of the ordinary category $\Alg_R$ 
in the category $\Coalg_R$, with hom-objects $P(A,B)$.

In Section \ref{catsModComod}, we define the \emph{global category of modules} 
and \emph{comodules},
$\Mod$ and $\Comod$. These two categories arise via the 
Grothendieck construction, corresponding
to the functors sending an algebra (respectively coalgebra) 
to the category 
of its modules (respectively comodules) in $\Mod_R$, and they have nice 
categorical properties. In particular, as observed by 
Wischnewsky at the end of \cite{MR0414567}, 
the category $\Comod$ is comonadic over the simple product category 
$\Mod_R\times\Coalg_R$.

In Section \ref{existmeascomod}, the universal measuring comodule $Q(M,N)$ is defined, via a natural 
isomorphism similar to the one defining the universal measuring algebra,
$\Comod(X,Q(M,N))\cong\Mod(M,\Hom(X,N))$. Applications of measuring 
comodules (as well as measuring coalgebras) can be found in \cite{MR1793011}. 
We give a proof of existence of $Q(M,N)$ again in our general setting.
This proof is not a direct generalization of the proof of
existence of $P(A,B)$. Rather it makes detailed use
of the internal structure of $\Comod$ and $\Mod$. 

In Section \ref{enrichmmodscomods}, in an analogous way to 
Section \ref{enrichmalgscoalgs}, we show how the global category 
$\Mod$ is enriched in the symmetric monoidal closed category $\Comod$, with hom-objects
$Q(M,N)$. Hence, at this point we have established the situation where the 
domain and codomain of the fibration $\Mod\to\Alg_R$ are enriched over
the domain and codomain of the opfibration $\Comod\to\Coalg_R$.

In Section \ref{catComodrevisit}, we additionally prove that $\Comod$ is monoidal closed, and we
sketch how, when we work in $\mathbf{Vect}_k$ for a field $k$, the existence 
of the measuring comodule is more straightforward than in the general case.

I would like to thank Prof. Martin Hyland 
for posing the driving questions for this project, and
also for his invaluable suggestions and advice.
Furthemore, I would like to thank Dr. Ignacio
Lopez Franco for his significant contribution
on matters of direction and methodology.

\section{Background}\label{background}

\subsection{Categories of Monoids and Comonoids}\label{catsmonscomons} 
The categories of monoids and
comonoids for a monoidal category $\ca{V}$ are defined in the usual way. 
For example, $\mathbf{Comon}(\ca{V})$ has as
objects triples $(C,\Delta,\epsilon)$ where $C\in\mathrm{ob}\ca{V}$, 
$\Delta:C\to C\otimes C$ is the \emph{comultiplication} 
and $\epsilon:C\to I$ is the \emph{counit}, such that the diagrams
\begin{displaymath} 
\xymatrix {C\ar[r]^-{\Delta}\ar[d]_-{\Delta} &
C\otimes C\ar[d]^-{1\otimes\Delta}\\ C\otimes
C\ar[r]^-{\Delta\otimes1} & C\otimes C\otimes C}\qquad
\mathrm{and}\qquad 
\xymatrix {I\otimes C\ar[dr]_-{l_C} & C\otimes
C\ar[l]_-{\epsilon\otimes1}\ar[r]^-{1\otimes\epsilon} & C\otimes I
\ar[dl]^-{r_C}\\ & C\ar[u]_-{\Delta} &}
\end{displaymath} 
commute, and has as morphisms
$(C,\Delta,\epsilon)\to (C',\Delta',\epsilon')$ arrows $f:C\to C'$ in 
$\ca{V}$ such that
\begin{displaymath} 
\xymatrix {C\ar[r]^-{\Delta}\ar[d]_-{f} & C\otimes
C\ar[d]^-{f\otimes f}\\ C'\ar[r]^-{\Delta'} & C'\otimes
C'}\qquad\mathrm{and}\qquad \xymatrix{ C\ar[r]^-{\epsilon}\ar[d]_-{f}
& I\\ C'\ar[ur]_-{\epsilon'} &}
\end{displaymath} 
commute. The category of monoids $\mathbf{Mon(\ca{V})}$
in a monoidal category is defined dually, with objects $(A,m,\eta)$
where $A\in\mathrm{ob}\ca{V}$, $m:A\otimes A\to A$ is the \emph{multiplication}
and $\eta:I\to A$ is the \emph{unit}.

Both the categories of monoids and comonoids of a symmetric monoidal
category $\ca{V}$ are themselves symmetric monoidal categories, 
the tensor product and the symmetry
inherited from $\ca{V}$. For example, $\mathbf{Mon(\ca{V})}$ 
is monoidal via the monoid structure
\begin{displaymath}
A\otimes B\otimes A\otimes B\xrightarrow{1\otimes s\otimes 1} 
A\otimes A\otimes B\otimes B\xrightarrow{m_A\otimes m_B}A\otimes B
\end{displaymath}
on $A\otimes B$ for $A,B\in\mathbf{Mon(\ca{V})}$, where $s$ is the 
symmetry in $\ca{V}$. 

There are obvious forgetful functors $\mathbf{Mon(\ca{V})}\to\ca{V}$, 
$\mathbf{Comon(\ca{V})}\to\ca{V}$, and these often have a left/right 
adjoint respectively, sending an object of $\ca{V}$ to the free 
monoid/cofree comonoid on that object. There exist various conditions on $\ca{V}$
that guarantee the existence of free monoids. 
For example, if $\ca{V}$ has countable coproducts 
which are preserved by the tensor product (on either side), then the free monoid
on $X\in \mathrm{ob}\ca{V}$ is given by $\coprod_{n\in\mathbb{N}} X^{\otimes n}$.

Recall that given two (ordinary) adjoint functors 
$F\dashv G$ between monoidal categories, colax monoidal structures on 
the left adjoint $F$ correspond bijectively, via mates 
under the adjunction, to lax monoidal structures on 
the right adjoint $G$. This is a well-known result, coming from
the so-called \emph{doctrinal adjunction} \cite{Doctrinal}. 

For example, in a symmetric monoidal closed category $\ca{V}$, since
$-\otimes A\dashv[A,-]$ for all $A$, the bifunctor 
$[-,-]:\ca{V}^\mathrm{op}\times\ca{V}\to\ca{V}$ is the
\emph{parametrized adjoint} (see~\cite{MacLane})
of the tensor product functor
$\otimes:\ca{V}\times\ca{V}\to\ca{V}$. Since the tensor
product is a (strong) monoidal functor, hence lax and colax, via
\begin{displaymath} 
\phi_{(A,B),(A',B')}:A\otimes B\otimes
A'\otimes B'\stackrel{\sim}{\longrightarrow}A\otimes A'\otimes
B\otimes B' 
\quad\mathrm{and}\quad
 \phi_0:I\stackrel{\sim}{\longrightarrow}I\otimes I,
\end{displaymath} we get:
\begin{cor}
In a symmetric monoidal closed category $\ca{V}$, the 
internal hom functor 
$[-,-]:\ca{V}^\mathrm{op}\otimes\ca{V}\to\ca{V}$
is a lax monoidal functor, with lax structure maps
$\psi_{(A,B),(A',B')}:[A,B]\otimes[A',B']\to[A\otimes A',B\otimes B']$,
$\psi_0:I\to[I,I]$
corresponding under the adjunction $-\otimes X\dashv[X,-]$ to 
$[A,B]\otimes[A',B']\otimes A\otimes A'\xrightarrow{1\otimes s\otimes 1}
[A,B]\otimes A\otimes[A',B']\otimes A'
\xrightarrow{e\otimes e}B\otimes B'$
and $I\otimes I\xrightarrow{\sim} I$.
\end{cor}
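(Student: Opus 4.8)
The plan is to read this off from doctrinal adjunction, exactly as the preceding discussion suggests, treating $\otimes$ as a strong (hence colax) monoidal bifunctor and $[-,-]$ as its parametrized adjoint. First I would fix the monoidal structures in play: equip $\ca{V}\times\ca{V}$ and $\ca{V}^\mathrm{op}\times\ca{V}$ with the componentwise tensor $(A,B)\otimes(A',B')=(A\otimes A',B\otimes B')$ (the first slot taken in $\ca{V}$ or $\ca{V}^\mathrm{op}$ as appropriate), so that $\otimes\colon\ca{V}\times\ca{V}\to\ca{V}$ is strong monoidal with coherence isomorphism $\phi$ assembled from the associators and the symmetry $s$ as displayed. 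Reading $F(X\otimes Y)\to FX\otimes FY$ for $X=(A,B)$, $Y=(A',B')$, its colax structure is then $\phi^{-1}$, while the family of adjunctions $-\otimes X\dashv[X,-]$ exhibits $[-,-]$ as the parametrized (contravariant-in-the-first-slot) right adjoint.

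Next I would take mates. The lax structure maps $\psi$ and $\psi_0$ are defined as the mates of the colax structure maps of $\otimes$ under the adjunctions $-\otimes X\dashv[X,-]$; concretely, $\psi_{(A,B),(A',B')}$ is the transpose of $(e\otimes e)\circ(1\otimes s\otimes 1)$ under $-\otimes(A\otimes A')\dashv[A\otimes A',-]$, and $\psi_0$ the transpose of the unit isomorphism $I\otimes I\cong I$, precisely as in the statement. Here $e$ is the evaluation, i.e.\ the counit of each adjunction, so that $\psi$ is obtained by composing $\phi^{-1}$ (manifested as the reshuffle $1\otimes s\otimes 1$) with the evaluations and transposing. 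Naturality of $\psi$ in all four arguments then follows from naturality of $e$ and of the adjunction bijections.

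The remaining content is to verify that these mates satisfy the associativity and unit axioms of a lax monoidal functor. Here I would use that taking mates is functorial and compatible with pasting: the two coherence diagrams for $([-,-],\psi,\psi_0)$ are obtained by transporting across the adjunctions the corresponding diagrams witnessing that $\otimes$ is colax monoidal, and the latter hold because $\otimes$ is strong. Equivalently, one may unwind $\psi$ through the evaluation formula and check the two axioms by hand, reducing each to the coherence theorem for the symmetric monoidal structure of $\ca{V}$ together with the triangle identities.

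The main obstacle I anticipate is that the doctrinal adjunction recalled above is phrased for an ordinary adjunction $F\dashv G$, whereas $\otimes$ has only a parametrized right adjoint rather than a genuine one; so I must either promote doctrinal adjunction to this two-variable, partly contravariant setting or carry out the coherence verification directly. Keeping careful track of the symmetry $s$ and of the variance in the first slot, where passing to $\ca{V}^\mathrm{op}$ reverses the relevant associativity and unit constraints, is where the bookkeeping is most delicate; this is exactly the kind of situation the $\ca{W}$-representation formalism mentioned in the introduction is designed to handle cleanly.
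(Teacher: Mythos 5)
Your proposal is correct and follows essentially the same route as the paper: the corollary is obtained by doctrinal adjunction, taking the lax structure maps $\psi$, $\psi_0$ to be the mates of the (strong, hence colax) structure of $\otimes$ under the parametrized adjunctions $-\otimes X\dashv[X,-]$. The extra care you take over the parametrized/contravariant bookkeeping is a reasonable elaboration of what the paper leaves implicit.
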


An important property of lax monoidal functors is that they map
monoids to monoids. 
In particular, the internal hom 
for a symmetric
monoidal closed category $\ca{V}$ 
takes monoids to monoids, 
and since we have
$\mathbf{Mon}(\ca{V}^{\mathrm{op}}\times\ca{V})
\cong\mathbf{Comon}(\ca{V})^\mathrm{op}\times\mathbf{Mon}(\ca{V})$,
we obtain a functor
\begin{equation}\label{eq:defH} 
\xymatrix @R=.05in
{\mathbf{Mon}([-,-])=H:\mathbf{Comon}(\ca{V})^{\mathrm{op}}
\times\mathbf{Mon}(\ca{V})\ar[r]
&\mathbf{Mon}(\ca{V})\\ \qquad\qquad\qquad\qquad\quad
\qquad\qquad\qquad\qquad(C,A)\ar @{|->}[r] & [C,A].\quad}
\end{equation}

The concrete content of this observation is that whenever $C$ 
is a comonoid and $A$ a monoid, the object $[C,A]$ is endowed with
the structure of a monoid, with unit $I\to [C,A]$ which is the 
transpose, under $-\otimes C\dashv [C,-]$, of 
$C\xrightarrow{\epsilon}I\xrightarrow{\eta}A,$
and with multiplication $[C,A]\otimes[C,A]\to[C,A]$ the 
transpose of 
\begin{displaymath}
\xymatrix @C=.4in @R=1pc {[C,A]\otimes[C,A]\otimes C
\ar[r]^-{1\otimes 1\otimes \Delta}
\ar @{-->}[ddrr]
& [C,A]\otimes[C,A]\otimes C\otimes C\ar[r]^-{1\otimes s\otimes 1}
& [C,A]\otimes C\otimes [C,A]\otimes C\ar[d]
^-{e\otimes e}\\
&& A\otimes A\ar[d]^-{m}\\
&& A.}
\end{displaymath}

\subsection{Categories of Modules and Comodules}\label{catsmodscomods}

The categories of modules of a monoid and comodules of a 
comonoid in a monoidal category $\ca{V}$ are defined in the usual 
way. More precisely, 
for each monoid $(A,m,\eta)\in\mathbf{Mon}(\ca{V})$ there is a category
$\Mod_{\ca{V}}(A)$ of (left) $A$-modules, with objects $(M,\mu)$, 
where $M$ is an object in $\ca{V}$ and $\mu:A\otimes M\to M$ is the 
\emph{action}, an arrow such that the diagrams
\begin{equation}\label{eq:defmod} 
\xymatrix {A\otimes A\otimes M\ar[r]^-{m\otimes
1}\ar[d]_-{1\otimes\mu} & A\otimes M\ar[d]^-{\mu}\\ A\otimes
M\ar[r]^-{\mu} & M}\qquad\mathrm{and}\qquad\xymatrix {& A\otimes
M\ar[rd]^-{\mu} &\\ I\otimes M\ar[rr]^-{l_M}\ar[ur]^-{\eta\otimes1} && M}
\end{equation} 
commute, and morphisms $(M,\mu_M)\to(N,\mu_N)$ are arrows $u:M\to N$ in $\ca{V}$ such that
\begin{equation}\label{eq:defmod2}
 \xymatrix {A\otimes M\ar[r]^-{1\otimes
u}\ar[d]_-{\mu_M} & A\otimes N\ar[d]^-{\mu_N}\\ M\ar[r]^-u & N}
\end{equation} 
commutes. The category of (right) comodules
$\Comod_{\ca{V}}(C)$ for a comonoid $(C,\Delta,\varepsilon)$
in $\mathbf{Comon}(\ca{V})$ is defined dually, with objects
$(X,\delta)$, where $X$ is an object in $\ca{V}$ and 
$\delta:X\to X\otimes C$ is the \emph{coaction}, sastifying dual axioms.
It is well-known that the forgetful functors 
$\mathbf{Mod_{\ca{V}}}(A)\xrightarrow{V_A}\ca{V}$ 
and $\mathbf{Comod_{\ca{V}}}(C)\xrightarrow{U_C}\ca{V}$ which simply
forget the module action/comodule coaction, are respectively 
monadic/comonadic.

Each monoid arrow $f:A\to B$ determines a functor  
\begin{equation}\label{eq:res}
\Mod(f)=f^{\sharp}:\Mod_{\ca{V}}(B)\to\Mod_{\ca{V}}(A)
\end{equation} 
sometimes called \emph{restriction of scalars},
which makes every $B$-module $N$ into an $A$-module
$f^{\sharp}N$ via the action 
$A\otimes N\xrightarrow{f\otimes1}
B\otimes N\xrightarrow{\mu}N.$
Also, each $B$-module arrow becomes
an $A$-module arrow, and so we have a commutative triangle of categories
and functors
\begin{equation}\label{eq:tr1}
\xymatrix @R=.25in{\Mod_{\ca{V}}(B)\ar[rr]^-{f^{\sharp}}\ar[dr]_-{V_B} &&
\Mod_{\ca{V}}(A)\ar[dl]^-{V_A}\\ & \ca{V}. &}
\end{equation}
Dually, each comonoid arrow $g:C\to D$
induces a functor
\begin{equation}\label{eq:cores}
\Comod(g)=g_*:\Comod_{\ca{V}}(C)\to\Comod_{\ca{V}}(D)
\end{equation} 
called \emph{corestriction of scalars}, which makes every $C$-comodule $X$ into a 
$D$-comodule $g_*X$ via the coaction
$X\xrightarrow{\delta}X\otimes C\xrightarrow{1\otimes g}X\otimes D.$
The respective commutative triangle is
\begin{equation}\label{eq:tr2}
\xymatrix @R=.25in{\Comod_{\ca{V}}(C)\ar[rr]^-{g_*}\ar[dr]_-{U_C} &&
\Comod_{\ca{V}}(D)\ar[dl]^-{U_D}\\ & \ca{V}. &}
\end{equation}
By the above commutative triangles, where the legs are monadic 
and comonadic respectively, $f^\sharp$ is always continuous and 
$g_*$ always cocontinuous.

We saw in the previous section how any lax monoidal functor
$F:\ca{V}\to\ca{W}$ induces a functor
$\mathbf{Mon}(F):\mathbf{Mon}(\ca{V})\to\mathbf{Mon}(\ca{W})$. 
Furthemore, this carries over to the categories of modules. 
If $M\in\Mod_{\ca{V}}(A)$ for a
monoid $A$, then $FM\in\Mod_{\ca{W}}(FA)$, via the
action
\begin{displaymath} 
FA\otimes FM\xrightarrow{\phi_{A,M}}
F(A\otimes M)\xrightarrow{F\mu}FM.
\end{displaymath} 
Again, we can apply this to the lax monoidal internal hom in a symmetric 
monoidal closed category $\ca{V}$, 
hence there is an induced functor for
$C\in\mathbf{Comon}(\ca{V})$ and
$A\in\mathbf{Mon}(\ca{V})$,
\begin{equation} \label{eq:hommodg}
\xymatrix @R=.05in
{\Comod_{\ca{V}}(C)^\mathrm{op}\times\Mod_{\ca{V}}(A)\ar[r]
& \Mod_{\ca{V}}([C,A])\\ \qquad\qquad\qquad\qquad\quad(X,M)\ar
@{|->}[r] & [X,M].\quad\qquad}
\end{equation}
Concretely, this means that whenever $(X,\delta)$ is a $C$-comodule
and $(M,\mu)$ is an $A$-module, the object $[X,M]$ obtains the structure 
of a $[C,A]$-module, with action $[C,A]\otimes[X,M]\to [X,M]$ 
which is the transpose, under $-\otimes X\dashv [X,-]$, of 
\begin{equation}\label{eq:lala} 
\xymatrix @R=1pc {[C,A]\otimes
[X,M]\otimes X\ar[r]^-{1\otimes1\otimes\delta}\ar @{-->}[ddrr] &
[C,A]\otimes [X,M]\otimes X\otimes
C\ar[r]^-{1\otimes s\otimes 1} &
[C,A]\otimes C\otimes
[X,M]\otimes X\ar[d]^-{e_A\otimes e_M}\\ 
&& A\otimes
M\ar[d]^-{\mu}\\ 
&& M.}
\end{equation}

\subsection{Admissible Categories}\label{admcats} 
We mentioned earlier that the existence 
of the free monoid/cofree comonoid functor depends on specific 
assumptions on the category $\ca{V}$. We now consider a class of monoidal categories
which provides good results regarding the formation of limits and 
colimits of the categories of monoids and comonoids, as well as properties 
which permit the existence of various adjunctions.

Recall (see~\cite{MR1294136}) that 
a \emph{locally $\lambda$-presentable} 
category $\ca{K}$ is a cocomplete category
which has a set $\ca{A}$ of $\lambda$-presentable objects such that
every object is a $\lambda$-filtered colimit of objects from $\ca{A}$. 
In such a category $\ca{K}$, all
$\lambda$-presentable objects have a set of representatives
of isomorphism classes of objects. Any such set is denoted by
$\mathbf{Pres}_{\lambda}\ca{K}$ and is a small dense full subcategory
of $\ca{K}$, hence also a strong generator. 
Other useful properties of locally presentable categories are
completeness, wellpoweredness and co-wellpoweredness.  

Following the approach of \cite{MR2529129}, consider the class of
\emph{admissible} monoidal categories, i.e. locally presentable
symmetric monoidal categories $\ca{V}$, such that for each object $C$,
the functor $C\otimes -$ preserves filtered colimits. Examples of this
class of categories is the category $\Mod_R$ for a commutative ring $R$,
every locally presentable category with respect to cartesian products 
and every symmetric monoidal closed category that is locally presentable.

Then, for an admissible monoidal category
$\ca{V}$, it is shown in \cite{MR2529129}  
that $\mathbf{Mon}(\ca{V})$ is
monadic over $\ca{V}$ and locally presentable itself and also
$\mathbf{Comon}(\ca{V})$ is a locally presentable category 
and comonadic over $\ca{V}$.

The following simple adjoint functor theorem, identified by Max Kelly,
will be used repeatedly.
\begin{thm}~\cite[5.33]{Kelly}\label{Kellythm}
If the cocomplete $\ca{C}$ has a small dense subcategory, every
cocontinuous $S:\ca{C}\to\ca{B}$ has a right adjoint.
\end{thm}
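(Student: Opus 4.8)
The plan is to reduce the statement to the representability of certain presheaves and then to build the representing objects as colimits, using cocontinuity of $S$ and the density hypothesis. Write $i:\ca{A}\hookrightarrow\ca{C}$ for the small dense subcategory. First I would recall that $S:\ca{C}\to\ca{B}$ has a right adjoint if and only if, for every $B\in\ca{B}$, the presheaf $\ca{B}(S-,B):\ca{C}^{\mathrm{op}}\to\mathbf{Set}$ is representable, the representing objects then assembling into a functor $G$ with $GB$ the representing object. Density of $\ca{A}$ means that the restricted-Yoneda (nerve) functor $\tilde{\imath}=\ca{C}(i-,=):\ca{C}\to[\ca{A}^{\mathrm{op}},\mathbf{Set}]$ is fully faithful, equivalently that every object is the canonical colimit $C\cong\colim_{(a,f)\in i/C}ia$ of its diagram over the comma category $i/C$.

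Fix $B$ and set $T=Si:\ca{A}\to\ca{B}$. Since $\ca{A}$ is small and all hom-sets are sets, the comma category $T/B$ — with objects the pairs $(a,x\colon Ta\to B)$ — is small, so cocompleteness of $\ca{C}$ lets me define
\[
GB:=\colim_{(a,x)\in T/B}ia,
\]
with colimit injections $\lambda_{(a,x)}\colon ia\to GB$. Because $S$ is cocontinuous, $S(GB)\cong\colim_{(a,x)}S\,ia$, and the family $\{x\colon S\,ia\to B\}$ is a cocone over this diagram (a morphism $(a,x)\to(a',x')$ in $T/B$ is exactly an $h\colon a\to a'$ with $x'\circ S\,ih=x$), so it induces a canonical arrow $\epsilon_B\colon S(GB)\to B$ with $\epsilon_B\circ S\lambda_{(a,x)}=x$. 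I would then prove that $(GB,\epsilon_B)$ is a universal arrow from $S$ to $B$, i.e. that for every $C$ and every $g\colon SC\to B$ there is a unique $\bar g\colon C\to GB$ with $\epsilon_B\circ S\bar g=g$.

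To carry out this verification I would reduce it to the generators. Writing $C\cong\colim_{(a,f)\in i/C}ia$ and using that $S$ is cocontinuous gives $SC\cong\colim_{(a,f)}S\,ia$, whence
\[
\ca{C}(C,GB)\cong\lim_{(a,f)\in i/C}\ca{C}(ia,GB)
\quad\text{and}\quad
\ca{B}(SC,B)\cong\lim_{(a,f)\in i/C}\ca{B}(S\,ia,B),
\]
and the comparison map $h\mapsto\epsilon_B\circ Sh$ respects these presentations, so it suffices to show that $\ca{C}(ia,GB)\to\ca{B}(S\,ia,B)$ is a bijection, naturally in $a\in\ca{A}$. Equivalently, writing $\Phi_B:=\ca{B}(S-,B)\circ i=\ca{B}(T-,B)$, it suffices to show $\tilde{\imath}\,GB\cong\Phi_B$, i.e. that $\Phi_B$ lies in the essential image of the fully faithful $\tilde{\imath}$. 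This is the crux and the step I expect to be the main obstacle, since it amounts to controlling maps \emph{into} the colimit $GB$. The conceptual reason it holds: cocompleteness makes $\tilde{\imath}$ a fully faithful right adjoint, with left adjoint $L\Phi=\colim_{\mathrm{el}(\Phi)}ia$ (so $GB=L\Phi_B$), exhibiting $\ca{C}$ as a reflective subcategory of $[\ca{A}^{\mathrm{op}},\mathbf{Set}]$; and because $S$ is cocontinuous the presheaf $\ca{B}(S-,B)$ is continuous, so a density computation yields a natural isomorphism $\ca{B}(SC,B)\cong[\ca{A}^{\mathrm{op}},\mathbf{Set}](\tilde{\imath}C,\Phi_B)$ and forces the unit $\Phi_B\to\tilde{\imath}L\Phi_B$ to be invertible, placing $\Phi_B$ in the reflective subcategory. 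Granting $\tilde{\imath}\,GB\cong\Phi_B$, Yoneda gives the required bijection on $\ca{A}$, taking limits over $i/C$ promotes it to $\ca{C}(C,GB)\cong\ca{B}(SC,B)$, and a routine check of naturality in $C$ and $B$ upgrades $B\mapsto GB$ to a functor right adjoint to $S$.
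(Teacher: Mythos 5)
The paper offers no proof of this theorem --- it is imported verbatim from Kelly [5.33] --- so there is nothing internal to compare against; I can only judge your argument on its own terms. Your architecture (realise $\ca{C}$ as a reflective subcategory of $[\ca{A}^{\mathrm{op}},\mathbf{Set}]$ via the nerve $\tilde{\imath}$, set $GB=\colim_{(a,x)\in T/B}ia=L\Phi_B$, and reduce the universal property to $\tilde{\imath}\,GB\cong\Phi_B$ by writing $C$ as its density colimit) is the standard route and all of those reductions are correct. The genuine gap is exactly at the step you flag as the crux. The natural isomorphism $\ca{B}(SC,B)\cong[\ca{A}^{\mathrm{op}},\mathbf{Set}](\tilde{\imath}C,\Phi_B)$ for $C\in\ca{C}$ does hold, but it does not, as stated, ``force the unit $\Phi_B\to\tilde{\imath}L\Phi_B$ to be invertible.'' Taking $C=GB$, the element $\epsilon_B$ yields a map $\rho:\tilde{\imath}\,GB\to\Phi_B$ with $\rho\circ\eta=1_{\Phi_B}$ --- only a retraction. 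The missing half, $\eta\circ\rho=1$, is precisely the uniqueness clause of the universal property (injectivity of $h\mapsto\epsilon_B\circ Sh$ on $\ca{C}(ia,GB)$), i.e.\ the assertion that $GB$ admits no maps in from generators other than those coming from the colimit injections; nothing you have written rules these out.

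The gap is fillable, in at least two ways. (1) Since $\rho\circ\eta=1$, the idempotent $\eta\circ\rho$ on $\tilde{\imath}\,GB$ equals $\tilde{\imath}e$ for a unique idempotent $e$ on $GB$ (full faithfulness); split $e$ in the cocomplete $\ca{C}$ and use that $\tilde{\imath}$ preserves split idempotents (they are absolute) to conclude that $\Phi_B$, being the splitting of $\tilde{\imath}e$, is isomorphic to $\tilde{\imath}D$ for the splitting object $D$, whence $\ca{B}(S-,B)\cong\ca{C}(-,D)$ is representable. Equivalently, upgrade your isomorphism to all presheaves $G$, namely $[\ca{A}^{\mathrm{op}},\mathbf{Set}](G,\Phi_B)\cong\ca{B}(SLG,B)$, which exhibits $\Phi_B$ as local for the unit maps of $L\dashv\tilde{\imath}$ and hence as lying in the reflective subcategory; your version, restricted to $G=\tilde{\imath}C$, is strictly weaker. (2) More cheaply: the existence half of the universal property, which your construction does deliver (every $g:SC\to B$ factors through $\epsilon_B$ via the density colimit of $C$), already says that $(GB,\epsilon_B)$ is a one-object solution set, so the dual of the General Adjoint Functor Theorem applied to the cocontinuous $S$ on the cocomplete, locally small $\ca{C}$ produces the right adjoint without ever proving $\tilde{\imath}\,GB\cong\Phi_B$. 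Note finally the silent hypothesis that $\ca{B}$ is locally small, which you need for $T/B$ to be small.
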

As an application of this theorem, consider the category $\mathbf{Comon}(\ca{V})$ 
for a locally presentable
symmetric monoidal closed $\ca{V}$. Then:
\begin{itemize}
\item[-] $\mathbf{Comon}(\ca{V})$ is cocomplete.
\item[-] $\mathbf{Comon}(\ca{V})$ has a small dense subcategory,
$\mathbf{Pres}_{\lambda}\mathbf{Comon}(\ca{V})$,
since it is locally presentable.
\item[-] The functor $-\otimes
C:\mathbf{Comon}(\ca{V})\to\mathbf{Comon}(\ca{V})$ is cocontinuous,
via the commutative diagram
$\xymatrix{\mathbf{Comon}(\ca{V})\ar[r]^-{-\otimes
C}\ar[d]_-F & \mathbf{Comon}(\ca{V})\ar[d]^-F\\
\ca{V}\ar[r]^-{-\otimes FC} & \ca{V}}$,
where $(-\otimes FC)\circ F$ is cocontinuous and the
forgetful $F$ creates colimits. 
\end{itemize}
Therefore the functor 
$-\otimes C$ has a right adjoint, and hence
the following result holds (see also \cite[3.2]{MR2529129}):
\begin{prop}\label{coalgmonclosed}
If $\ca{V}$ is a 
locally presentable symmetric monoidal closed 
category, the category of comonoids 
$\mathbf{Comon}(\ca{V})$ 
is a monoidal closed category as well.
\end{prop}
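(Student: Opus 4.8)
The plan is to reduce the assertion that $\mathbf{Comon}(\ca{V})$ is monoidal closed to the existence of a right adjoint for each functor $-\otimes C$, and then to produce these right adjoints by appealing to Kelly's adjoint functor theorem (Theorem~\ref{Kellythm}). Recall that $\mathbf{Comon}(\ca{V})$ is already a symmetric monoidal category, with tensor product and symmetry inherited from $\ca{V}$. Because it is symmetric, it is left closed precisely when it is right closed, so it suffices to show that for every comonoid $C$ the endofunctor $-\otimes C$ on $\mathbf{Comon}(\ca{V})$ has a right adjoint. Writing $\mathrm{HOM}(C,-)$ for this adjoint, the family of adjunctions $-\otimes C\dashv\mathrm{HOM}(C,-)$ then assembles, via the parametrized-adjoint calculus recalled above, into an internal-hom bifunctor $\mathbf{Comon}(\ca{V})^{\mathrm{op}}\times\mathbf{Comon}(\ca{V})\to\mathbf{Comon}(\ca{V})$ exhibiting the closed structure.

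To invoke Theorem~\ref{Kellythm} I must check two hypotheses on the domain $\ca{C}=\mathbf{Comon}(\ca{V})$ and one on the functor $-\otimes C$. For the hypotheses on the domain I would first observe that a locally presentable symmetric monoidal closed $\ca{V}$ is admissible: the closed structure makes each $C\otimes-$ a left adjoint, hence cocontinuous, so in particular it preserves filtered colimits. By the results of \cite{MR2529129} recalled above it then follows that $\mathbf{Comon}(\ca{V})$ is itself locally presentable, which gives at once that it is cocomplete and that $\mathbf{Pres}_{\lambda}\mathbf{Comon}(\ca{V})$ is a small dense subcategory. This disposes of the two conditions on $\ca{C}$ in the statement of the theorem.

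The remaining, and genuinely substantive, step is to verify that $-\otimes C$ is cocontinuous, and I expect this to be the main obstacle. Here I would use the commutative square relating $-\otimes C$ to $-\otimes FC$ through the forgetful functor $F:\mathbf{Comon}(\ca{V})\to\ca{V}$. Since $\ca{V}$ is closed, $-\otimes FC$ preserves all colimits, so the composite $(-\otimes FC)\circ F$ is cocontinuous as soon as $F$ is. The key point I would establish is that $F$ \emph{creates} colimits: given a diagram of comonoids $(C_i)$, its colimit $L$ in $\ca{V}$ carries a canonical comonoid structure --- the comultiplication being induced, by the universal property of $L$, from the compatible cocone $C_i\xrightarrow{\Delta_i}C_i\otimes C_i\to L\otimes L$, which is legitimate exactly because tensoring preserves colimits in $\ca{V}$ --- and this structure exhibits $L$ as the colimit in $\mathbf{Comon}(\ca{V})$. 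Granting that $F$ creates colimits, colimits in $\mathbf{Comon}(\ca{V})$ are computed by $F$ in $\ca{V}$, and the cocontinuity of the composite $F\circ(-\otimes C)=(-\otimes FC)\circ F$ then forces $-\otimes C$ to preserve them. Theorem~\ref{Kellythm} now furnishes the desired right adjoint, and the proof is complete. The delicate thread throughout is the interaction between the tensor and colimits: cocontinuity of $\otimes$ in $\ca{V}$ is simultaneously what makes $F$ create colimits and what makes $-\otimes FC$ cocontinuous, while everything else is formal.
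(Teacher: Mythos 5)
Your proof is correct and takes essentially the same route as the paper: both reduce the claim to a right adjoint for each $-\otimes C$ and obtain it from Theorem~\ref{Kellythm}, using local presentability of $\mathbf{Comon}(\ca{V})$ (via admissibility) for cocompleteness and the small dense subcategory, and the commutative square over the forgetful functor $F$ for cocontinuity of $-\otimes C$. The only cosmetic difference is that you verify by hand that $F$ creates colimits, where the paper cites the comonadicity of $\mathbf{Comon}(\ca{V})$ over $\ca{V}$.
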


\subsection{The category $\Mod_R$} \label{Modcat}
Consider the category $\Mod_R$ of $R$-modules and $R$-module maps, 
for $R$ a commutative ring. It is of course a symmetric monoidal category, 
with the usual tensor product of $R$-modules.
It is also monoidal closed, by the well-known adjunction
$\xymatrix @C=3.5pc{\Mod_R\ar @<+.7ex>[r]^-{-\otimes N}
_-*-<5pt>{\text{\scriptsize{$\bot$}}} 
& \Mod_R,\ar @<+.7ex>[l]^-{\Hom_R(N,-)}}$ 
and moreover a locally presentable category, complete and cocomplete.

The categories of monoids and comonoids in $\Mod_R$
are $\mathbf{Mon}(\Mod_R)=\Alg_R$ and
$\mathbf{Comon}(\Mod_R)=\Coalg_R$. 
Based on (\ref{eq:defH}),
the internal hom of the category induces the functor
\begin{equation} 
\xymatrix
@R=.05in{\Hom_R:\Coalg_R^{\mathrm{op}}\times\Alg_R\ar[r]
&\Alg_R\qquad\quad\\ 
\quad\qquad\qquad\qquad\quad(C,A)\ar @{|->}[r] & \Hom_R(C,A).}
\end{equation}
This is the same as the
well-known fact that for $C$ an $R$-coalgebra and $A$ an
$R$-algebra, $\Hom_R(C,A)$ obtains
the structure of an $R$-algebra under
the \emph{convolution product}
\begin{equation}\label{eq:conv} 
(f*g)(c)=\sum f(c_1)g(c_2)\quad\mathrm{and}\quad
1=\eta\circ\epsilon.
\end{equation}

Now, since $\Mod_R$ is an admissible category in the sense of 
the previous section, we deduce that 
$\Alg_R$ is a locally presentable category and monadic 
over $\Mod_R$, and $\Coalg_R$ is 
comonadic over $\Mod_R$, locally
presentable and monoidal closed.

Denote by $\Comod_C$ the category of $R$-modules which 
have a $C$-comodule structure
for an $R$-coalgebra $C$, and respectively $\Mod_A$ 
the category of $A$-modules. We know that these categories possess
many useful properties (see e.g.~\cite{MR0414567}). In particular,
apart from the facts that $\Mod_A$ is monadic and $\Comod_C$ is comonadic 
over $\Mod_R$,
we also have that $\Comod_C$ is complete, wellpowered and co-wellpowered, 
has a generator and a cogenerator and is locally presentable.

Regarding the restriction and corestriction of scalars in this case, 
for $\ca{V}=\Mod_R$,
$f:A\to B$ in $\Alg_R$ and
$g:C\to D$ in $\Coalg_R$, 
the triangles (\ref{eq:tr1}), (\ref{eq:tr2}) become 
$\qquad\xymatrix @R=.22in @C=.25in{\Mod_B\ar[rr]^-{f^{\sharp}}\ar[dr]_-{V_B} &&
\Mod_A\ar[dl]^-{V_A}\\ & \Mod_R &} \,\ $ and 
$ \,\ \xymatrix @R=.22in @C=.20in {\Comod_C\ar[rr]^-{g_*}\ar[dr]_-{U_C} &&
\Comod_D\ar[dl]^-{U_D}\\ & \Mod_R &}$.\\
Since $\Mod_R$ is a symmetric monoidal closed category,
with all limits and colimits, we obtain
a pair of adjoints 
$\xymatrix
{\Mod_B \ar[rr]|-{f^\sharp} &&
\Mod_A \ar @/_4ex/[ll]_-{f_\sharp}
^-{\bot}
\ar @/^4ex/[ll]^-{\tilde{f}}
_-{\bot}}$
for the restriction of scalars, 
the left adjoint
$f_\sharp\dashv f^\sharp$ given by 
$f_\sharp\cong B\otimes_A -:\Mod_A
\to\Mod_B$
where $B$ is regarded as a left-$B$ right-$A$ bimodule, 
and the right adjoint $f^\sharp\dashv \tilde{f}$
given by 
$\tilde{f}\cong \Hom_A(B,-):\Mod_A\to
\Mod_B$
where $B$ is regarded as a left-$A$ right-$B$ bimodule.

As for the corestriction of scalars, because of the 
properties of $\Comod_C$ mentioned above, there 
exists a right adjoint $g_*\dashv g^*$, which is given, when $C$ is a 
flat $R$-coalgebra, by  $g^*\cong -\square_D C:\Comod_D\to
\Comod_C,$
where $C$ is regarded as a left-$D$ right-$C$ bicomodule and
$\square$ is the cotensor product (e.g. see ~\cite{MR0414567}).
   
Next, we can apply (\ref{eq:hommodg}) to $\Mod_R$. For 
$C$ an $R$-coalgebra and $A$ an $R$-algebra, the induced map, 
denoted by $\Hom$, is
\begin{equation}\label{eq:hommod} 
\xymatrix @R=.05in
{\Hom:\Comod_C^\mathrm{op}\times \Mod_A \ar[r] &
\Mod_{\Hom_R(C,A)}\qquad\\
\qquad\quad\qquad\qquad\quad(X,M)\ar @{|->}[r] & \Hom_R(X,M) \,\ \qquad}
\end{equation}
This means that whenever $(X,\delta)$ is a $C$-comodule
and $(M,\mu)$ is an $A$-module, then $(\Hom_R(X,M),\mu')$ is a $\Hom_R(C,A)$-module,
denoted just by $\Hom(X,M)$, the action $\mu'$ described as in (\ref{eq:lala}).

Moreover, the functor
$\Hom(-,M):\Comod_C^\mathrm{op}\to \Mod_{\Hom_R(C,A)}$
for any $M\in\Mod_A$ is continuous, which is obvious by the following 
commutative square, where $\Hom_R(-,UM)\circ U^\mathrm{op}$ preserves limits and the 
forgetful $V$ creates them:
\begin{equation}\label{eq:sth3} 
\xymatrix @C=.7in
@R=.3in{\Comod_C^\mathrm{op}\ar[r]^-{\Hom(-,M)}\ar[d]_-{U^\mathrm{op}} &
\Mod_{\Hom_R(C,A)}\ar[d]^-{V}\\
\Mod_R^\mathrm{op}\ar[r]^-{\Hom_R(-,UM)} &\Mod_R.}
\end{equation}
Similarly, for each $D$-comodule $Y$, the evident functor
$-\otimes
Y:\Comod_C\longrightarrow \Comod_{C\otimes D}$
 is cocontinuous, as it is clear from the
following commutative diagram:  
\begin{equation}\label{eq:sth}
\xymatrix @C=.7in @R=.3in
{\Comod_C\ar[r]^-{-\otimes Y}\ar[d]_-{U} &
\Comod_{C\otimes D}\ar[d]^-U\\ 
\Mod_R
\ar[r]^-{-\otimes UY} & \Mod_R.}
\end{equation}

\subsection{Actions of a Monoidal Category}\label{actions}
Recall that an \emph{action} of a monoidal category
$\ca{V}={(\ca{V},\otimes,I,a,l,r)}$ on a category $\ca{D}$ is given
by a functor $*:\ca{V}\times\ca{D}\to\ca{D}$ written $(X,D)\mapsto
X*D$, a natural isomorphism with components 
${\alpha}_{XYD}:(X\otimes Y)*D\stackrel{\sim}{\longrightarrow} X*(Y*D)$, 
and a natural isomorphism with components
${\lambda}_D :I*D\stackrel{\sim}{\longrightarrow} D$, 
satisfying the commutativity of the diagrams
\begin{equation}\label{eq:diag1} 
\xymatrix{((X\otimes Y)\otimes
Z)*D\ar[r]^-{\alpha}\ar[d]_-{a*1} & (X\otimes Y)*(Z*D)\ar[r]^-{\alpha}
& X*(Y*(Z*D))\\ (X\otimes(Y\otimes Z))*D\ar[rr]^-{\alpha} &&
X*((Y\otimes Z)*D),\ar[u]^-{1*\alpha}}
\end{equation}
\begin{equation}\label{eq:diag2} 
\xymatrix{(I\otimes
X)*D\ar[rr]^-{\alpha}\ar[dr]_-{l*1} && I*(X*D)\ar[dl]^-{\lambda}\\ &
X*D, &}
\end{equation}
\begin{equation}\label{eq:diag3} 
\xymatrix{(X\otimes
I)*D\ar[rr]^-{\alpha}\ar[dr]_-{r*1} && X*(I*D)\ar[dl]^-{1*{\lambda}}\\
& X*D. &}
\end{equation}

\begin{rmk} A monoidal category $\ca{V}$ is actually a 
\emph{pseudomonoid} inside the monoidal category 
$(\mathbf{Cat},\times,1)$, so this action described above is 
exactly the definition of a \emph{pseudoaction} of a 
pesudomonoid on an object of $\mathbf{Cat}$.
\end{rmk}

The most important fact here, explained in detail in \cite{MR1897810}, 
is that to give a category $\ca{D}$ and an action of a monoidal
closed $\ca{V}$ with a right adjoint for each $-*D$ is to give
a tensored $\ca{V}$-category:
 
\begin{prop}\label{actionenrich}
Suppose that $\ca{V}$ is a monoidal category which acts on 
a category $\ca{D}$ via 
the bifunctor $H:\ca{V}\times\ca{D}\to\ca{D}$, and $H(-,B)$ has
a right adjoint $H(-,B)\dashv F(B,-)$ for every $B\in\ca{D}$, with the
natural isomorphism
\begin{equation}\label{eq:adjact}
\ca{D}(H(X,B),D)\cong\ca{V}(X,F(B,D)).
\end{equation}
Then, we can enrich $\ca{D}$ in $\ca{V}$, in the sense that 
there is a $\ca{V}$-category $\ca{K}$,
with the same objects as $\ca{D}$, hom-objects
$\ca{K}(A,B)=F(A,B)$ and underlying category
$\ca{K}_o=\ca{D}$.
\end{prop}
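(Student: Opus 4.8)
The plan is to build the $\ca{V}$-category $\ca{K}$ explicitly, reading off its composition and identities from the action structure by transposing across the adjunction (\ref{eq:adjact}), and then to reduce each $\ca{V}$-category axiom to one of the coherence diagrams (\ref{eq:diag1})--(\ref{eq:diag3}) for the action. First I would fix the data. Take $\mathrm{ob}\,\ca{K}=\mathrm{ob}\,\ca{D}$ and $\ca{K}(A,B)=F(A,B)$, and write $\mathrm{ev}^{A}_{B}\colon H(F(A,B),A)\to B$ for the component at $B$ of the counit of $H(-,A)\dashv F(A,-)$. The composition law $M_{ABC}\colon F(B,C)\otimes F(A,B)\to F(A,C)$ I would define as the transpose under (\ref{eq:adjact}) of the map $H(F(B,C)\otimes F(A,B),A)\to C$ obtained by first applying $\alpha$ to rewrite $H(F(B,C)\otimes F(A,B),A)\cong H(F(B,C),H(F(A,B),A))$, then applying $H(1,\mathrm{ev}^{A}_{B})$, and finally $\mathrm{ev}^{B}_{C}$. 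The identity $j_{A}\colon I\to F(A,A)$ I would define as the transpose of $\lambda_{A}\colon H(I,A)\xrightarrow{\sim}A$.

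Second, I would check that the underlying category is $\ca{D}$. By definition $\ca{K}_{o}(A,B)=\ca{V}(I,F(A,B))$, and (\ref{eq:adjact}) with $X=I$ together with $\lambda_{A}$ gives $\ca{V}(I,F(A,B))\cong\ca{D}(H(I,A),B)\cong\ca{D}(A,B)$, naturally in $A$ and $B$. A short verification that this bijection carries the $\ca{V}$-enriched composition and identities to composition and identities in $\ca{D}$---again by transposing and using the triangle identities of the adjunction---then identifies $\ca{K}_{o}$ with $\ca{D}$.

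Third, and this is the real content, I would verify the three $\ca{V}$-category axioms. Each becomes, after transposing both sides across (\ref{eq:adjact}) into $\ca{D}(H(-,A),-)$ and expanding the nested evaluations by means of the triangle identities, an equality of maps out of an iterated action object. The two unit axioms collapse to the triangle diagrams (\ref{eq:diag2}) and (\ref{eq:diag3}) for $\lambda$, combined with naturality of the counit. The associativity axiom---comparing the two ways of composing $F(C,D)\otimes F(B,C)\otimes F(A,B)$---transposes to a diagram on $H\bigl((F(C,D)\otimes F(B,C))\otimes F(A,B),A\bigr)$ whose commutativity is exactly the pentagon (\ref{eq:diag1}) for $\alpha$, glued along three naturality squares for $\mathrm{ev}$.

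The hard part will be this associativity verification: it requires transposing a composite of length three, so one must repeatedly invoke naturality of the adjunction counit and the triangle identities in order to slide the evaluation maps past the associativity constraints $\alpha$, and then apply (\ref{eq:diag1}) in precisely the right orientation. I would organize this as a single large diagram chase, keeping the parenthesization explicit at every stage so that each occurrence of $\alpha$ is accounted for. Since $\ca{D}$ is a genuine pseudoaction of the pseudomonoid $\ca{V}$ in $\mathbf{Cat}$, the coherence theorem for such actions guarantees that there is no ambiguity in the bracketings, and so the chase closes.
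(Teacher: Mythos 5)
Your proposal is correct and follows essentially the same route as the paper, which only sketches the argument by noting that composition and identities arise from transposing the evaluation/counit and unitor maps across the adjunction (\ref{eq:adjact}) and that the enriched-category axioms reduce to the action coherence diagrams (\ref{eq:diag1})--(\ref{eq:diag3}) (deferring details to \cite{MR1897810}). Your explicit definitions of $M_{ABC}$ and $j_A$ and the identification of $\ca{K}_o$ with $\ca{D}$ via $\lambda$ are exactly the standard expansion of that sketch; the only cosmetic remark is that the final appeal to a coherence theorem is unnecessary, since the three postulated diagrams together with naturality already suffice for the axioms.
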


The proof that there exists a composition functor 
$M:F(B,C)\otimes F(A,B)\to F(A,C)$ and identity elements
$j_A:I\to F(A,A)$ satisfying the usual axioms of 
enriched categories relies on just the correspondence of 
arrows under the adjunction (\ref{eq:adjact}) and the
action properties. We denote the $\ca{V}$-category $\ca{K}$
again by $\ca{D}$.

When $\ca{V}$ is monoidal closed,
we get a natural isomorphism 
$\ca{D}(H(X,B),D)\cong[X,F(B,D)]$,  
and so
this $\ca{V}$-enriched representation defines the 
\emph{tensor
product} (see \cite{Kelly}) in $\ca{D}$ of $X$ and $B$, 
namely $H(X,B)$.

As a special case of this result, consider the situation when 
$\ca{V}=\ca{C}$, $\ca{D}=\ca{A}^\mathrm{op}$, and the action 
is the bifunctor $H:\ca{C}\times\ca{A}^\mathrm{op}\to\ca{A}^\mathrm{op}$. 
Note that, via $H(-,B)\dashv F(B,-)$,
the functor $F$ is actually the parametrized adjoint of $H$, so
it is also a bifunctor $F:\ca{A}\times\ca{A}^\mathrm{op}\to\ca{C}$. 
Denote this $F$ as 
$P:\ca{A}^\mathrm{op}\times\ca{A}\to\ca{C}$ and 
so the natural isomorphism (\ref{eq:adjact}) of the adjunction 
$H(-,B)\dashv P(-,B)$ becomes
\begin{displaymath}
\ca{A}^\mathrm{op}(H(C,B),A)\cong\ca{C}(C,P(A,B)).
\end{displaymath}

\begin{cor}\label{importcor1}
When we have an action $H:\ca{C}\times\ca{A}^\mathrm{op}
\to\ca{A}^\mathrm{op}$ 
of the monoidal $\ca{C}$ on the category $\ca{A}^\mathrm{op}$
along with an adjunction $\xymatrix{\ca{C}\ar@<+.7ex>[r]^-{H(-,B)} 
_-*-<5pt>{\text{\scriptsize{$\bot$}}}&
\ca{A}^\mathrm{op}\ar@<+.7ex>[l]^-{P(-,B)}}$ for each $B$, then $\ca{A}^\mathrm{op}$  
is enriched in $\ca{C}$ with hom-objects $\ca{A}^\mathrm{op}(A,B)=P(B,A)$, 
and $H(C,B)$ is the tensor product of $C$ and $B$. 
\end{cor}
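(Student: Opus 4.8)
The plan is to obtain the statement as a direct instantiation of Proposition \ref{actionenrich}, so that essentially no work beyond careful bookkeeping is needed. I would set $\ca{V}=\ca{C}$ and $\ca{D}=\ca{A}^{\mathrm{op}}$ in that proposition and take the action to be the given bifunctor $H:\ca{C}\times\ca{A}^{\mathrm{op}}\to\ca{A}^{\mathrm{op}}$. The hypothesis of Proposition \ref{actionenrich} requires, for each object $B$, a right adjoint to $H(-,B):\ca{C}\to\ca{A}^{\mathrm{op}}$; this is precisely the functor $P(-,B)$ supplied in the statement, and the isomorphism (\ref{eq:adjact}) becomes $\ca{A}^{\mathrm{op}}(H(C,B),A)\cong\ca{C}(C,P(A,B))$, exactly the adjunction displayed before the corollary.

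First I would make explicit the role of $P$ as a parametrized adjoint. Since $H$ is covariantly functorial in its second variable and $P(-,B)$ is the right adjoint of $H(-,B)$, the mate construction makes $P(A,B)$ covariant in $A\in\ca{A}^{\mathrm{op}}$ and contravariant in $B\in\ca{A}^{\mathrm{op}}$, so that $P$ assembles into a bifunctor $\ca{A}^{\mathrm{op}}\times\ca{A}\to\ca{C}$, as fixed before the statement. This is the same datum as the functor $F$ of Proposition \ref{actionenrich} after the identification $F(B,D)=P(D,B)$; I would record this dictionary once and for all, so that the hom-objects $\ca{K}(A,B)=F(A,B)$ produced by the proposition translate to $\ca{A}^{\mathrm{op}}(A,B)=P(B,A)$, which is exactly the claimed enrichment.

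With the dictionary in place, Proposition \ref{actionenrich} immediately furnishes a $\ca{C}$-category structure on $\ca{A}^{\mathrm{op}}$: composition morphisms $P(C,B)\otimes P(B,A)\to P(C,A)$ and identities $j_A:I\to P(A,A)$, satisfying the associativity and unit axioms, are obtained by transposing the appropriate arrows across (\ref{eq:adjact}) and invoking the action coherence diagrams (\ref{eq:diag1})--(\ref{eq:diag3}), and the underlying ordinary category is $\ca{A}^{\mathrm{op}}$. For the final clause, I would appeal to the concluding observation of Proposition \ref{actionenrich}: the representation $\ca{A}^{\mathrm{op}}(H(C,B),D)\cong\ca{C}(C,P(D,B))$, together with $\ca{A}^{\mathrm{op}}(B,D)=P(D,B)$, exhibits $H(C,B)$ as the enriched tensor product of $C\in\ca{C}$ with $B$ in the sense of \cite{Kelly} (here only the monoidal, not the closed, version is available since $\ca{C}$ is merely monoidal).

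The only point demanding care — and the step I expect to be the main obstacle — is the consistent handling of two opposite-category conventions at once: the codomain $\ca{A}^{\mathrm{op}}$ of the action and the mixed variance of $P$ in its two arguments. I would double-check that, under these conventions, composition really has the type $P(C,B)\otimes P(B,A)\to P(C,A)$ rather than its reverse, that the enrichment lands on $\ca{A}^{\mathrm{op}}$ and not on $\ca{A}$, and that the argument order in $F(A,B)=P(B,A)$ is respected throughout. Once these variances are tracked correctly against the dictionary, every remaining assertion is a formal consequence of the already-established Proposition \ref{actionenrich}.
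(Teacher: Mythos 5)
Your proposal is correct and follows essentially the same route as the paper: the corollary is obtained by instantiating Proposition \ref{actionenrich} with $\ca{V}=\ca{C}$, $\ca{D}=\ca{A}^{\mathrm{op}}$, identifying $P$ as the parametrized adjoint of $H$ via the dictionary $F(B,A)=P(A,B)$, so that the hom-objects become $\ca{A}^{\mathrm{op}}(A,B)=F(A,B)=P(B,A)$ and $H(C,B)$ is exhibited as the tensor product by the representing isomorphism. Your attention to the variance bookkeeping is exactly the care the paper itself takes in the paragraph preceding the corollary.
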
 

Moreover, when the monoidal 
category $\ca{C}$ is symmetric, then $\ca{A}=(\ca{A}^\mathrm{op})^\mathrm{op}$
is also enriched in $\ca{C}$, with the same objects and 
hom-objects 
$\ca{A}(A,B)=\ca{A}^\mathrm{op}(B,A)$.
Hence,

\begin{cor}\label{importcor2}
If $\ca{C}$ is symmetric and the above conditions hold, 
then $\ca{A}$ is enriched in $\ca{C}$, with 
hom-objects $\ca{A}(A,B)=P(A,B)$.
\end{cor}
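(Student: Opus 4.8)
The plan is to recognize the passage from the $\ca{C}$-enriched $\ca{A}^\mathrm{op}$ of Corollary~\ref{importcor1} to $\ca{A}$ as an instance of the standard formation of the \emph{opposite} of an enriched category, a construction that is available precisely because the base $\ca{C}$ is symmetric. From Corollary~\ref{importcor1} we already have a $\ca{C}$-category whose underlying ordinary category is $\ca{A}^\mathrm{op}$, with hom-objects $\ca{A}^\mathrm{op}(A,B)=P(B,A)$, composition morphisms $M\colon\ca{A}^\mathrm{op}(B,C)\otimes\ca{A}^\mathrm{op}(A,B)\to\ca{A}^\mathrm{op}(A,C)$ and identities $j_A\colon I\to\ca{A}^\mathrm{op}(A,A)$ satisfying the associativity and unit axioms. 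First I would lay out the data of the candidate $\ca{C}$-category $\ca{A}$: the same objects, the hom-objects $\ca{A}(A,B):=\ca{A}^\mathrm{op}(B,A)=P(A,B)$, and the same identity morphisms $j_A\colon I\to\ca{A}(A,A)=\ca{A}^\mathrm{op}(A,A)$.

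The only genuinely new ingredient is the composition, where the symmetry $s$ of $\ca{C}$ enters. I would define the composition $\ca{A}(B,C)\otimes\ca{A}(A,B)\to\ca{A}(A,C)$ as the composite
\begin{displaymath}
\ca{A}^\mathrm{op}(C,B)\otimes\ca{A}^\mathrm{op}(B,A)
\xrightarrow{s}\ca{A}^\mathrm{op}(B,A)\otimes\ca{A}^\mathrm{op}(C,B)
\xrightarrow{M}\ca{A}^\mathrm{op}(C,A),
\end{displaymath}
that is, the composition of $\ca{A}^\mathrm{op}$ precomposed with the symmetry isomorphism swapping the two tensor factors. It remains to verify the three enriched-category axioms for this data.

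Next I would check the axioms. The two unit triangles for $\ca{A}$ follow from the unit triangles for $\ca{A}^\mathrm{op}$ together with the naturality of $s$ and its compatibility with the constraints $l$ and $r$, the point being that $s$ interchanges the left and right unit triangles. For associativity I would paste two copies of the defining square above and reduce the claim, using naturality of $s$, to the associativity axiom of $\ca{A}^\mathrm{op}$; the reversal of order forced by taking opposites is then absorbed by the hexagon coherence relating $s$ to the associator $a$ of $\ca{C}$. I expect this associativity diagram chase to be the main obstacle, since one must keep careful track of how the two occurrences of $s$ interact with $a$; once the symmetric-monoidal coherence is invoked, however, the verification is purely formal.

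Finally I would identify the underlying ordinary category. Since the underlying-category functor applies $\ca{C}(I,-)$ to hom-objects, we get $\ca{A}(A,B)_o=\ca{C}(I,\ca{A}^\mathrm{op}(B,A))=(\ca{A}^\mathrm{op})_o(B,A)$, so the underlying category of the new $\ca{C}$-category is $(\ca{A}^\mathrm{op})^\mathrm{op}=\ca{A}$, as it should be. This exhibits $\ca{A}$ as enriched in $\ca{C}$ with hom-objects $\ca{A}(A,B)=P(A,B)$, completing the proof.
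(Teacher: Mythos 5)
Your proposal is correct and follows exactly the route the paper takes: the paper justifies Corollary~\ref{importcor2} by the one-line observation that $\ca{A}=(\ca{A}^\mathrm{op})^\mathrm{op}$ inherits an enrichment from the $\ca{C}$-category $\ca{A}^\mathrm{op}$ of Corollary~\ref{importcor1} because the base $\ca{C}$ is symmetric, which is precisely the standard opposite-of-an-enriched-category construction you spell out (symmetry-twisted composition, same identities, underlying category $((\ca{A}^\mathrm{op})_o)^\mathrm{op}=\ca{A}$). You simply make explicit the coherence checks that the paper leaves implicit.
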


\section{The existence of the universal measuring coalgebra}\label{existmeascoal}

As we mentioned in the introduction, we explore the 
existence of an object $P(A,B)$ 
and a natural isomorphism 
\begin{equation}\label{this}
\Alg_k(A,\Hom_k(C,B))\cong\Coalg_k(C,P(A,B))
\end{equation}
defining the universal measuring coalgebra, 
but for general categories of monoids and comonoids
in some monoidal category $\ca{V}$. Since the functor 
$\Hom_k(-,-)$ is in fact the internal hom, algebras 
are the monoids and coalgebras  are the comonoids in $\mathbf{Vect_k}$,
this evidently comes down to the existence of an adoint of 
the internal hom functor applied on comonoids in the 
first variable.  

Consider a symmetric monoidal closed category $\ca{V}$. 
We saw in the previous section that a functor
$H:\mathbf{Comon}(\ca{V})^{\mathrm{op}}\times
\mathbf{Mon}(\ca{V})\to \mathbf{Mon}(\ca{V})$
is induced by the internal hom (see (\ref{eq:defH})). 
In order to find a right adjoint for
\begin{displaymath}
H(-,B)^{\mathrm{op}}:\mathbf{Comon}(\ca{V})
\to\mathbf{Mon}(\ca{V})^{\mathrm{op}}
\end{displaymath} we can use Theorem \ref{Kellythm}. For that we need
$H(-,B)^{\mathrm{op}}$ to be a cocontinuous functor and 
$\mathbf{Comon}(\ca{V})$ to be a cocomplete category with
a small dense subcategory.

For example, in the case of
$\ca{V}=\mathbf{Vect_k}$, one can
easily see that the above conditions hold: $\mathbf{Coalg_k}$ is a
cocomplete category and has a small dense subcategory, namely the
coalgebras which are finitely dimensional as vector spaces over $k$, 
and the hom functor is continuous. So we recover (\ref{this}).

We saw in Section \ref{admcats} that if we consider the 
class of admissible monoidal categories, we 
already know that $\mathbf{Comon}(\ca{V})$ is comonadic over $\ca{V}$,
hence cocomplete, and is also a locally presentable category, 
hence it has a small dense subcategory. Moreover, the diagram
\begin{equation}\label{eq:Hcont} 
\xymatrix{\mathbf
{Comon}(\ca{V})^{\mathrm{op}}\ar[rr]^-{H(-,B)}\ar[d]_-{F_1^\mathrm{op}} &&
\mathbf{Mon}(\ca{V})\ar[d]^-{F_2}\\
\ca{V}^{\mathrm{op}}\ar[rr]^-{[-,F_2B]} &&\ca{V}}
\end{equation} 
commutes, where $F_1,F_2$ are the respective 
forgetful functors. The functor $[-,F_2B]$ is continuous 
as the right adjoint
of $[-,F_2B]^{\mathrm{op}}$, so the composite 
$[-,F_2B]\circ F_1^\mathrm{op}$ preserve all limits, and also
the monadic $F_2$ creates limits. Therefore $H(-,B)$ is 
continuous, and so its opposite functor 
$H(-,B)^{\mathrm{op}}$ is
cocontinuous.

Since all the required conditions of the theorem 
are satisfied, we obtain the following result.

\begin{prop}
For an admissible monoidal category $\ca{V}$, the 
functor $H(-,B)^{\mathrm{op}}$ has a right adjoint, namely
$P(-,B):\mathbf{Mon}(\ca{V})^{\mathrm{op}}
\to\mathbf{Comon}(\ca{V})$,
and a natural isomorphism
\begin{equation}\label{eq:genmeas}
\mathbf{Comon}(\ca{V})(C,P(A,B))\cong
\mathbf{Mon}(\ca{V})^{\mathrm{op}}(H(C,B)^{\mathrm{op}},A)
\end{equation}
is established.
\end{prop}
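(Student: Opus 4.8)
The plan is to realize $P(-,B)$ as the right adjoint produced by Kelly's adjoint functor theorem (Theorem~\ref{Kellythm}). I would apply that theorem with $\ca{C}=\mathbf{Comon}(\ca{V})$ and $S=H(-,B)^{\mathrm{op}}:\mathbf{Comon}(\ca{V})\to\mathbf{Mon}(\ca{V})^{\mathrm{op}}$; its two hypotheses are that $\ca{C}$ is cocomplete with a small dense subcategory and that $S$ is cocontinuous. Once both are in hand, the theorem immediately yields a right adjoint, which I name $P(-,B)$, and the adjunction $H(-,B)^{\mathrm{op}}\dashv P(-,B)$ is precisely the natural isomorphism~(\ref{eq:genmeas}).

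The first hypothesis requires no new work, as it was already recorded in Section~\ref{admcats}. Since $\ca{V}$ is admissible, $\mathbf{Comon}(\ca{V})$ is comonadic over $\ca{V}$ and therefore cocomplete, and it is moreover locally presentable, so a set of representatives of the isomorphism classes of $\lambda$-presentable comonoids furnishes the required small dense subcategory. I would simply cite these facts.

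The heart of the argument, and the step I expect to be the main obstacle, is the cocontinuity of $S$, equivalently the continuity of $H(-,B):\mathbf{Comon}(\ca{V})^{\mathrm{op}}\to\mathbf{Mon}(\ca{V})$. Here I would run the commuting square~(\ref{eq:Hcont}) involving the forgetful functors $F_1:\mathbf{Comon}(\ca{V})\to\ca{V}$ and $F_2:\mathbf{Mon}(\ca{V})\to\ca{V}$. Travelling down-then-across, the composite $[-,F_2B]\circ F_1^{\mathrm{op}}$ preserves limits: the internal hom $[-,F_2B]:\ca{V}^{\mathrm{op}}\to\ca{V}$ is continuous, being the right adjoint of its own opposite (via $\ca{V}(X,[Y,Z])\cong\ca{V}(Y,[X,Z])$, which uses the symmetry), while $F_1^{\mathrm{op}}$ preserves limits because the comonadic $F_1$ creates, hence preserves, colimits. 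Commutativity of the square then shows that $F_2\circ H(-,B)$ preserves limits. The delicate point is to transfer this down to $H(-,B)$ itself: since the monadic $F_2$ creates limits, a limit of a diagram of monoids is computed in $\ca{V}$ and lifts uniquely; as $F_2$ sends the image under $H(-,B)$ of a limit cone to a genuine limit cone in $\ca{V}$, the uniqueness clause of creation forces that image to be the limit cone in $\mathbf{Mon}(\ca{V})$. This is exactly the inference that creation of limits by $F_2$ licenses, and it is where the monadicity of $F_2$ is genuinely used.

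With $S$ shown cocontinuous, all hypotheses of Theorem~\ref{Kellythm} are met, and I would close by invoking the theorem to produce $P(-,B)$ and reading off~(\ref{eq:genmeas}) as its defining adjunction isomorphism.
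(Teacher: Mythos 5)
Your proposal is correct and takes essentially the same route as the paper: both verify the hypotheses of Theorem~\ref{Kellythm} by citing from Section~\ref{admcats} that $\mathbf{Comon}(\ca{V})$ is cocomplete (being comonadic over $\ca{V}$) and locally presentable (hence has a small dense subcategory), and both establish cocontinuity of $H(-,B)^{\mathrm{op}}$ via the commuting square~(\ref{eq:Hcont}), using continuity of $[-,F_2B]$ and creation of limits by the monadic $F_2$. The only difference is that you spell out the final ``creation of limits'' inference in more detail than the paper does.
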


In particular, since the category $\Mod_R$ is an admissible
monoidal category, we can apply the above to get an adjunction
$\xymatrix
@C=.6in{\Coalg_R\ar@<+.7ex>[r]^-{\Hom_R(-,B)^{\mathrm{op}}} 
 _-*-<5pt>{\text{\scriptsize{$\bot$}}} &
\Alg_R^{\mathrm{op}}\ar@<+.7ex>[l]^-{P(-,B)}}$ given by
\begin{equation}\label{eq:meascoal}
\Coalg_R(C,P(A,B))\cong\Alg_R(A,\Hom_R(C,B)).
\end{equation}
We call the object $P(A,B)$ in $\Coalg_R$ the
\emph{universal measuring coalgebra}.
We obtain a bifunctor 
\begin{equation}\label{eq:exP}
P:\Alg_R^\mathrm{op}\times\Alg_R
\to\Coalg_R
\end{equation}
such that the above isomorphism is natural in all three 
variables.

\begin{rmk} The functor $\Hom_k(-,k)$ for a field $k$
is the so-called `dual algebra functor', taking 
any coalgebra $C$ to its dual $C^*$=$\Hom_k(C,k)$, which has 
a natural structure of an algebra. It is well-known
that, since $A^*$ for an algebra $A$ in general fails to have 
a coalgebra structure, we can define
\begin{displaymath}
A^0=\{g\in A^*|kerg\,\ \textrm{contains a cofinite ideal}\}
\end{displaymath}
which turns out to be a coalgebra, so that 
the functors $( \,\ )^0$ and $( \,\ )^*$ are adjoint 
to one another. The adjunction 
(\ref{eq:meascoal}) then, for $B=k$, produces the well-known isomorphism 
\begin{displaymath}
\Coalg_k(C,A^0)\cong\Alg_k(A,C^*)
\end{displaymath} 
therefore $P(A,k)\cong A^0$. So actually (\ref{eq:meascoal}) 
generalizes the dual algebra functor adjunction for $R$
a commutative ring.
\end{rmk}
 
We now proceed to the statement and proof of a lemma that connects the
above adjunction (\ref{eq:meascoal}) 
with the usual $-\otimes_R C\dashv\Hom_R(C,-)$ for arbitrary $R$-modules.

\begin{lem}\label{lemma} 
Suppose we have an $R$-algebra map $f:A\to
\Hom_R(C,B)$, $A,B\in\Alg_R$, $C\in\Coalg_R$. If
it corresponds to $\bar{f}:A\otimes C\to B$ under $-\otimes C\dashv
\Hom_R(C,-)$ and to $\hat{f}:C\to P(A,B)$ under
$\Hom_R(-,B)^{\mathrm{op}}\dashv P(-,B)$, then
$\bar{f}=(\alpha\otimes\hat{f})\circ e$, where $e$ is the
evaluation and $\alpha$ the unit of the second adjunction.
\end{lem}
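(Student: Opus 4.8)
The plan is to treat this as a purely formal identity between two transposes, proved by chasing the naturality of the measuring isomorphism \eqref{eq:meascoal} against the elementary properties of the tensor--hom adjunction; no computation with module elements is needed, only universal properties. First I would pin down the two named maps. The map $\alpha$ is the component at $A$ of the (co)unit of the adjunction $\Hom_R(-,B)^{\mathrm{op}}\dashv P(-,B)$, most usefully described as the transpose of the identity: feeding $1_{P(A,B)}\in\Coalg_R(P(A,B),P(A,B))$ through \eqref{eq:meascoal} with $C=P(A,B)$ yields an algebra map $\alpha\colon A\to\Hom_R(P(A,B),B)$, the universal measuring in internal-hom form; and $e$ is the evaluation $e\colon\Hom_R(P(A,B),B)\otimes P(A,B)\to B$, the counit of $-\otimes P(A,B)\dashv\Hom_R(P(A,B),-)$.

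The key step is to exploit naturality of \eqref{eq:meascoal} in the variable $C$. The functors $\Coalg_R(-,P(A,B))$ and $\Alg_R(A,\Hom_R(-,B))$ are both contravariant in $C$, and the bijection is natural, so precomposition with $\hat f\colon C\to P(A,B)$ sits in a commuting square. Chasing $1_{P(A,B)}$ around it: one route sends it to $\alpha$ and then postcomposes with $\Hom_R(\hat f,B)$, the other sends it to $\hat f$ and then to its transpose $f$. Hence I obtain the relation $f=\Hom_R(\hat f,B)\circ\alpha$, an equality of algebra maps, and in particular of underlying $R$-module maps.

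It then remains to transpose under $-\otimes C\dashv\Hom_R(C,-)$. Writing $e_C$ for the evaluation at $C$, one has $\bar f=e_C\circ(f\otimes 1_C)$; substituting the relation above gives $\bar f=e_C\circ(\Hom_R(\hat f,B)\otimes 1_C)\circ(\alpha\otimes 1_C)$. The dinaturality of evaluation, namely $e_C\circ(\Hom_R(\hat f,B)\otimes 1_C)=e\circ(1\otimes\hat f)$ (``precompose with $\hat f$ then evaluate'' equals ``evaluate after $\hat f$''), replaces the first two factors, and bifunctoriality of $\otimes$ collapses $(1\otimes\hat f)\circ(\alpha\otimes 1_C)$ into $\alpha\otimes\hat f$. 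This produces the composite $A\otimes C\xrightarrow{\alpha\otimes\hat f}\Hom_R(P(A,B),B)\otimes P(A,B)\xrightarrow{e}B$, which is precisely the map $\bar f$ asserted in the statement.

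The only real obstacle is bookkeeping. One must keep the orientations straight through the opposite category $\Alg_R^{\mathrm{op}}$ appearing in the measuring adjunction and through the contravariance of both $\Hom_R(-,B)$ and $P(-,B)$, so that the naturality square is set up correctly and $\alpha$ is identified as the transpose of the identity rather than as the opposite (co)unit. One should also observe explicitly that, although $f$, $\hat f$ and $\alpha$ are morphisms of algebras and coalgebras, the derived identity $f=\Hom_R(\hat f,B)\circ\alpha$ is in particular an equality of underlying maps in $\Mod_R$, which legitimises applying the tensor--hom transpose and the naturality of evaluation --- both features of $\Mod_R$ --- in the final step.
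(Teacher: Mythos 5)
Your argument is correct and takes essentially the same route as the paper: the identity $f=\Hom_R(\hat f,B)\circ\alpha$ is exactly the left-hand triangle of the paper's diagram (which it justifies only ``by inspection''), and the dinaturality of the evaluation counit is its right-hand triangle. You merely spell out the naturality chase for the first step and write the diagram equationally, so there is nothing to add.
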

 
\begin{proof}
\begin{displaymath} 
\xymatrix @C=.25in {& \Hom_R(P(A,B),B)\otimes
C\ar[r]^-{1\otimes\hat{f}}\ar[ddr]^-{\Hom_R(\hat{f},1)\otimes 1} &
\Hom_R(P(A,B),B)\otimes P(A,B)\ar[dr]^-{e_B} &\\ A\otimes
C\ar[ur]^-{\alpha\otimes 1}\ar[drr]_-{f\otimes 1}\ar @{-->}
@/_3ex/[rrr]_{\bar{f}} && & B\\ && \Hom_R(C,B)\otimes
C\ar[ur]_-{e_B} &}
\end{displaymath}
By inspection of the diagram, we can see that the
left part is the commutative diagram which gives $f$ through its
transpose map $\hat{f}$ under $\Hom_R(-,B)^{\mathrm{op}}\dashv P(-,B)$,
and the right part commutes by dinaturality of the counit
$e_D^E:\Hom_R(D,E)\otimes D\to E$ of the parametrized adjunction
$-\otimes -\dashv \Hom_R(-,-)$.
\end{proof}

\section{Enrichment of algebras in coalgebras}\label{enrichmalgscoalgs}

Now that we have established the existence of the measuring coalgebra
$P(A,B)$ in $\Mod_R$ through the isomorphism
$\Coalg_R(C,P(A,B))\cong\Alg_R(A,\Hom_R(C,B))$,
we can combine this result with the theory 
of actions of monoidal categories 
and show that there exists a way to 
enrich the category $\Alg_R$ in the symmetric 
monoidal closed category $\Coalg_R$.

\begin{rmk}\label{actionHom}
For any symmetric monoidal category $\ca{V}$, the internal hom bifunctor
$[-,-]:\ca{V}^\mathrm{op}\times\ca{V}\to\ca{V}$ is an action of 
$\ca{V}^\mathrm{op}$ on $\ca{V}$, 
via the natural isomorphisms 
\begin{displaymath} 
\alpha_{XYZ}:[X\otimes Y,D]
\stackrel{\sim}{\longrightarrow}[X,[Y,Z]]\quad\mathrm{and}
\quad\lambda_D:[I,D]\stackrel{\sim}{\longrightarrow}D.
\end{displaymath}
Hence the functor
$\Hom_R:\Mod_R^\mathrm{op}
\times\Mod_R\to\Mod_R$ is an action (and 
$\Hom_R^\mathrm{op}$ is an action too).
\end{rmk}

If we take $\ca{A}=\Alg_R$ and
$\ca{C}=\Coalg_R$ in the statement of Corollary \ref{importcor1}, 
we do have a bifunctor
\begin{displaymath} 
H=\Hom_R^\mathrm{op}:\Coalg_R\times
\Alg_R^\mathrm{op}\to
\Alg_R^\mathrm{op},
\end{displaymath} 
and $\Hom_R(-,B)^\mathrm{op}$ has a right
adjoint $P(-,B)$ (by (\ref{eq:meascoal})). What remains to be 
shown is that the bifunctor
$H=\Hom_R(-,-)$ is an \emph{action} of
$\Coalg_R^\mathrm{op}$ on $\Alg_R$ (so then 
also $\Hom_R^\mathrm{op}$ will be an action of $\mathbf{Coalg_R}$ 
on $\mathbf{Alg_R}^\mathrm{op}$). We are looking for two natural 
isomorphisms in $\Alg_R$
\begin{displaymath}
\alpha_{CDA}:\Hom_R(C\otimes D,A)\xrightarrow{\sim}
\Hom_R(C,\Hom_R(D,A)),\,\ 
\lambda_A:\Hom_R(R,A)\stackrel{\sim}{\to} A
\end{displaymath}
such that the diagrams (\ref{eq:diag1}), (\ref{eq:diag2}), 
(\ref{eq:diag3}) commute. But we already know that these 
isomorphisms exist in $\Mod_R$ and make the functor 
$\Hom_R$ into an action, by the Remark \ref{actionHom} above. 
So it is enough to see that 
these $R$-module isomorphisms in fact lift to $R$-algebra isomorphisms
(since $\Alg_R$ is monadic over $\Mod_R$),
where $\Hom_R(C\otimes D,A)$, $\Hom_R(C,\Hom_R(D,A))$ and $\Hom_R(R,A)$
are algebras via convolution (see (\ref{eq:conv})).

Furthemore, the
diagrams which define an action commute, since they 
do for all $R$-modules. Therefore the bifunctor
$\Hom_R:\Coalg_R^\mathrm{op}\times
\Alg_R\to\Alg_R$ is indeed an action,
and so Corollaries \ref{importcor1}
and \ref{importcor2} apply on this case: 
\begin{prop}
The category $\Alg_R^\mathrm{op}$ is enriched in 
$\Coalg_R$, with hom-objects 
$\Alg_R^\mathrm{op}(A,B)=P(B,A)$,
and $\Hom_R(C,B)$ is the tensor product of $C$ and $B$. 
\end{prop}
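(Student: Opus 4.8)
The plan is to read the statement off as a direct instance of Corollary \ref{importcor1}, since both hypotheses of that corollary have been assembled in the discussion immediately preceding. First I would fix the dictionary of data: set $\ca{C}=\Coalg_R$, which is a symmetric monoidal closed, hence monoidal, category by Proposition \ref{coalgmonclosed}, and set $\ca{A}=\Alg_R$, so that $\ca{A}^\mathrm{op}=\Alg_R^\mathrm{op}$. Under this correspondence the bifunctor required by the corollary, $H\colon\ca{C}\times\ca{A}^\mathrm{op}\to\ca{A}^\mathrm{op}$, is exactly $\Hom_R^\mathrm{op}\colon\Coalg_R\times\Alg_R^\mathrm{op}\to\Alg_R^\mathrm{op}$.

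Next I would invoke the two facts already established. For the action axioms: the preceding paragraph shows that $\Hom_R\colon\Coalg_R^\mathrm{op}\times\Alg_R\to\Alg_R$ is an action of $\Coalg_R^\mathrm{op}$ on $\Alg_R$, because the coherence isomorphisms $\alpha_{CDA}$ and $\lambda_A$ already exist at the level of $\Mod_R$ (Remark \ref{actionHom}) and lift to $\Alg_R$ along the monadic forgetful functor, while the diagrams (\ref{eq:diag1})--(\ref{eq:diag3}) commute in $\Alg_R$ precisely because they commute in $\Mod_R$. Passing to opposite categories turns this into the action $H=\Hom_R^\mathrm{op}$ of $\Coalg_R$ on $\Alg_R^\mathrm{op}$, as recorded in the text. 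For the adjunction: for each $B$, the functor $\Hom_R(-,B)^\mathrm{op}=H(-,B)\colon\Coalg_R\to\Alg_R^\mathrm{op}$ has right adjoint $P(-,B)$ by the isomorphism (\ref{eq:meascoal}).

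With these two inputs in hand, Corollary \ref{importcor1} applies verbatim and yields a $\Coalg_R$-enrichment of $\Alg_R^\mathrm{op}$ whose hom-objects are $P(B,A)$ and in which $\Hom_R(C,B)$ is the tensor product of $C$ and $B$. I would emphasise that no further verification is needed: the composition morphisms, the identity elements $j_A\colon I\to P(A,A)$, and the associativity and unit axioms of an enriched category are produced purely formally from the adjunction (\ref{eq:adjact}) together with the action data, in the proof of Proposition \ref{actionenrich} on which the corollary rests. Thus the substance of the argument is entirely contained in the preceding section, and the proposition is just its specialisation.

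The only genuine care required is bookkeeping of variances, and this is where I expect the single subtle point to lie: one must check that the contravariant slot of $\Hom_R$ is the one carrying the coalgebra, so that after taking opposites the left adjoint $H(-,B)$ genuinely lands in $\Alg_R^\mathrm{op}$ and pairs with $P(-,B)$ in the direction demanded by (\ref{eq:meascoal}); a slip here would produce an enrichment with hom-objects $P(A,B)$ rather than $P(B,A)$. Once the opposite-category conventions are fixed consistently with Corollary \ref{importcor1}, the statement follows with no computation.
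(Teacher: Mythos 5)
Your proposal matches the paper's own argument essentially verbatim: the paper likewise establishes that $\Hom_R$ is an action of $\Coalg_R^\mathrm{op}$ on $\Alg_R$ by lifting the coherence isomorphisms $\alpha_{CDA}$ and $\lambda_A$ from $\Mod_R$ along the monadic forgetful functor (with the coherence diagrams commuting because they do for $R$-modules), combines this with the adjunction $\Hom_R(-,B)^\mathrm{op}\dashv P(-,B)$ from (\ref{eq:meascoal}), and reads the proposition off as an instance of Corollary \ref{importcor1}. Your closing remark on tracking variances is a sensible caution but introduces nothing beyond what the paper already does.
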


\begin{prop}
The category $\Alg_R$
is enriched in $\Coalg_R$, with hom-objects
$\Alg_R(A,B)=P(A,B)$.
\end{prop}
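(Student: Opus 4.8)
The plan is to obtain this as an immediate application of Corollary \ref{importcor2}, all of whose hypotheses have by now been assembled. The essential new input, beyond the previous proposition, is the symmetry of the enriching category: $\Coalg_R=\mathbf{Comon}(\Mod_R)$ is symmetric monoidal (indeed symmetric monoidal closed, by Proposition \ref{coalgmonclosed}), with symmetry inherited from $\Mod_R$, so the hypothesis ``$\ca{C}$ symmetric'' of Corollary \ref{importcor2} is satisfied with $\ca{C}=\Coalg_R$ and $\ca{A}=\Alg_R$.

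First I would recall that the remaining conditions of Corollary \ref{importcor1} are exactly those verified just above: the bifunctor $H=\Hom_R^\mathrm{op}:\Coalg_R\times\Alg_R^\mathrm{op}\to\Alg_R^\mathrm{op}$ is an action of $\Coalg_R$ on $\Alg_R^\mathrm{op}$, and for each $B$ the functor $H(-,B)=\Hom_R(-,B)^\mathrm{op}$ has the right adjoint $P(-,B)$ by the isomorphism (\ref{eq:meascoal}). Corollary \ref{importcor1} then yields the preceding proposition, namely that $\Alg_R^\mathrm{op}$ is $\Coalg_R$-enriched with hom-objects $\Alg_R^\mathrm{op}(A,B)=P(B,A)$. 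With symmetry now in hand, Corollary \ref{importcor2} transports this enrichment to the opposite category: $\Alg_R=(\Alg_R^\mathrm{op})^\mathrm{op}$ inherits a $\Coalg_R$-enrichment with the same objects and hom-objects $\Alg_R(A,B)=\Alg_R^\mathrm{op}(B,A)=P(A,B)$, which is exactly the assertion.

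The one point that deserves care --- though it is packaged inside Corollary \ref{importcor2} and so is not reproved here --- is the construction of the opposite $\ca{V}$-category when $\ca{V}$ is symmetric. The composition for $\Alg_R$ must be built from that for $\Alg_R^\mathrm{op}$ by first applying the symmetry $s$ of $\Coalg_R$ to interchange the two hom-objects, $P(B,C)\otimes P(A,B)\xrightarrow{s}P(A,B)\otimes P(B,C)$, before feeding them into the original composition morphism in the correct order; the identity elements $j_A:I\to P(A,A)$ are unchanged. Verifying the associativity and unit axioms for $\Alg_R$ then reduces to those already known for $\Alg_R^\mathrm{op}$ together with the coherence (hexagon) axioms for $s$. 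This is the standard opposite-category construction in enriched category theory, and it is precisely where symmetry of $\Coalg_R$ is indispensable: without it there would be no way to reverse the direction of composition. The main obstacle, such as it is, therefore lies entirely in this symmetry-dependent bookkeeping rather than in any further analysis of the object $P(A,B)$ itself.
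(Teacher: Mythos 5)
Your proposal is correct and takes essentially the same route as the paper: both deduce the statement from Corollary \ref{importcor2}, using the action structure of $\Hom_R$ and the adjunction (\ref{eq:meascoal}) established just beforehand, with the symmetry of $\Coalg_R$ (inherited from $\Mod_R$) allowing the enrichment of $\Alg_R^\mathrm{op}$ from Corollary \ref{importcor1} to be transported to $\Alg_R$. Your additional remarks on the symmetric opposite-$\ca{V}$-category construction simply unpack what the paper leaves packaged inside Corollary \ref{importcor2}.
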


\section{The categories $\Comod$ and $\Mod$}\label{catsModComod}

We consider the \emph{global category of comodules} and the \emph{global 
category of modules} over a commutative
ring $R$. In reality, both definitions are derived from the 
well-known fact that, via the Grothendieck 
construction, there is a correspondance between pseudofunctors 
$\ca{H}:\ca{B}^\mathrm{op}\to\mathbf{Cat}$
and cloven fibrations 
$P=\int{\ca{H}}:\ca{X}\to\ca{B}$,
and also between functors $\ca{H}$ and split fibrations 
$\int{\ca{H}}$ (and dually for opfibrations). Hence, the two functors
\begin{displaymath}
\ca{H}:\Alg_R^\mathrm{op}\to\mathbf{Cat}
\quad\textrm{and}\quad
\ca{G}:\Coalg_R\to\mathbf{Cat}
\end{displaymath} 
which send an algebra $A$ to $\Mod_A$ and a coalgebra $C$ to $\Comod_C$,
based on the well-behaved restriction and corestriction of scalars 
(see (\ref{eq:res}) and (\ref{eq:cores})), give rise to the global categories and 
the functors $V:\Mod\to\Alg_R$, $U:\Comod\to\Coalg_R$.
\\
The explicit definition of
$\Comod$ is the following:
\begin{itemize}
\item Objects: all comodules $X$ over all $R$-coalgebras $C$, denoted by $X_C$.
\item Morphisms: if $X$ is a $C$-comodule and $Y$ is a $D$-comodule, a
map between them is a pair $(k,g):X_C\to Y_D$ where
$\begin{cases}
g_*X\xrightarrow{k}Y & \text{in }\Comod_D\\
\quad C\xrightarrow{g}D & \text{in }\Coalg_R
\end{cases}$.
\item Composition: When we have two morphisms
$\xymatrix{X_C\ar[r]^-{(k,g)} & Y_D\ar[r]^-{(l,h)} & Z_E}$, so
$\begin{cases}
g_*X\xrightarrow{k}Y & \text{in }\Comod_D\\
\quad C\xrightarrow{g}D & \text{in }\Coalg_R
\end{cases}$ \quad\quad and \quad\quad 
$\begin{cases}
h_*Y\xrightarrow{l}Z & \text{in }\Comod_E\\
\quad D\xrightarrow{h}E & \text{in } \Coalg_R
\end{cases},$\\
then their composite $X_C\xrightarrow{(lk,hg)}Z_E$ is
\,\ \,\ $\begin{cases}
(hg)_*X\xrightarrow{lk}Z & \text{in }\Comod_E\\
\quad\quad\,\ C\xrightarrow{hg}E & \text{in } \Coalg_R
\end{cases}$\\
where $lk$ is given by the commutative triangle
$\xymatrix @C=6ex @R=.2in {h_*g_*X\ar @{-->}[r]\ar[dr]_-{h_*k} & Z\\ 
& h_*Y\ar[u]_-{l}}$.
\item[-] Associativity: It holds due to the associativity
of coalgebra and comodule morphisms.
\item[-] Identity: The map 
$\begin{cases}
(1_C)_*X=X\xrightarrow{1_X}X &\text{in }\Comod_C\\
\qquad\qquad\quad C\xrightarrow{1_C}C &\text{in }\Coalg_R
\end{cases} \,\ $ works as
the identity morphism $X_C\xrightarrow{(1_X,1_C)}X_C$ in this category.
\end{itemize}

There is an evident `forgetful' functor
$U:\Comod\to\Coalg_R$ which maps any comodule $X_C$ to its
coalgebra $C$, and any morphism to the coalgebra map of the
pair. This functor is not faithful, since for each coalgebra
map $UX_C\xrightarrow{g} UY_D$ we can choose more than one map in
$\Comod_D(g_*X,Y)$,
but it is full: any
coalgebra map $g:UX_C\to UY_D$ can be written as $U(\varepsilon,g)$, 
for $\begin{cases}
g_*g^*Y\xrightarrow{\varepsilon}Y & \text{in }\Comod_D\\
\qquad C\xrightarrow{g}D & \text{in }\Coalg_R
\end{cases}$, where
$\varepsilon$ is the
counit of the adjunction $g_*\dashv g^*$ (see Section \ref{Modcat}).

We now explore some of the properties of this
category. First of all, $\Comod$ is a 
symmetric monoidal category:
if $X_C,Y_D\in\Comod$, then $X_C\otimes Y_D\in\Comod$, 
being an $R$-module with a coaction
\begin{displaymath}
X\otimes Y\xrightarrow{\delta_X\otimes\delta_Y}
X\otimes C\otimes Y\otimes D
\xrightarrow{\sim} X\otimes Y\otimes C\otimes D 
\end{displaymath} 
over the coalgebra $C\otimes D$
($\Coalg_R$ is monoidal), and the symmetry is inherited from $\Mod_R$. 
Notice that we also have $U(X_C\otimes
Y_D)=C\otimes D=UX_C\otimes UY_D$, which means that the functor $U$
has the structure of a \emph{strict} monoidal functor.

Furthemore, $\Comod$ is a cocomplete
category. This can be shown to be true either via 
the explicit construction of colimits in 
\textbf{Comod}, or as a corollary to a stronger
result:
\begin{prop}\label{Gcomon}
The functor $G:\Comod\to\Mod_R
\times\Coalg_R$, given by
$X_C\mapsto (X,C)$, is comonadic.
\end{prop}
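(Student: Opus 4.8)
The plan is to exhibit $G$ as the forgetful functor of a comonad on $\Mod_R\times\Coalg_R$, by first producing a right adjoint and then identifying the Eilenberg--Moore category of the induced comonad with $\Comod$ directly, rather than verifying Beck's conditions by hand. First I would construct the right adjoint $R\colon\Mod_R\times\Coalg_R\to\Comod$. Given a pair $(M,C)$, set $R(M,C)=(M\otimes C)_C$, the cofree $C$-comodule on the $R$-module $M$, whose coaction is $1_M\otimes\Delta\colon M\otimes C\to M\otimes C\otimes C$; recall that $(-\otimes C)$ is right adjoint to the comonadic forgetful functor $U_C\colon\Comod_C\to\Mod_R$. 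To verify $G\dashv R$ I would unwind a morphism $X_D\to(M\otimes C)_C$ in $\Comod$: it is a pair $(k,g)$ with $g\colon D\to C$ in $\Coalg_R$ and $k\colon g_*X\to M\otimes C$ in $\Comod_C$, and the latter corresponds under the cofree adjunction $U_C\dashv(-\otimes C)$ to an $R$-module map $X\to M$, since $g_*X$ has underlying module $X$. This yields a bijection, natural in all variables,
\begin{displaymath}
\Comod(X_D,R(M,C))\cong\Coalg_R(D,C)\times\Mod_R(X,M)=(\Mod_R\times\Coalg_R)(GX_D,(M,C)),
\end{displaymath}
establishing the adjunction.

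Next I would compute the induced comonad $\mathbb{G}=GR$ on $\Mod_R\times\Coalg_R$: it sends $(M,C)\mapsto(M\otimes C,C)$, with counit $(1_M\otimes\epsilon,1_C)$ and comultiplication $(1_M\otimes\Delta,1_C)$; thus $\mathbb{G}$ is the identity comonad on the $\Coalg_R$ factor and is ``tensor with the parametrizing coalgebra'' on the $\Mod_R$ factor. A $\mathbb{G}$-coalgebra is then a pair $(M,C)$ equipped with a structure map whose $\Coalg_R$-component is forced to be $1_C$ by the counit axiom, so it is determined by a map $\rho\colon M\to M\otimes C$; the coassociativity and counit axioms for this $\mathbb{G}$-coalgebra structure are precisely the two axioms making $\rho$ into a $C$-comodule coaction. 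This identification is bijective on objects and produces a comparison functor $K\colon\Comod\to(\Mod_R\times\Coalg_R)^{\mathbb{G}}$.

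Finally I would check that $K$ is full and faithful, hence an isomorphism of categories. A $\mathbb{G}$-coalgebra map $((M,C),\rho)\to((N,D),\sigma)$ is a pair $(\phi,\gamma)$ in $\Mod_R\times\Coalg_R$ satisfying $(\phi\otimes\gamma)\circ\rho=\sigma\circ\phi$; I would verify that this is exactly the condition that the module map $\phi$ underlies a $D$-comodule map $\gamma_*M\to N$, i.e.\ a $\Comod$-morphism $M_C\to N_D$. The main obstacle, and essentially the only point requiring care, is matching the twisting by corestriction $\gamma_*$ built into the definition of $\Comod$-morphisms against the untwisted structure-compatibility equation for $\mathbb{G}$-coalgebra morphisms: unwinding the coaction $(1\otimes\gamma)\circ\rho$ of $\gamma_*M$ shows that the $D$-comodule condition $\sigma\circ\phi=(\phi\otimes1)\circ(1\otimes\gamma)\circ\rho$ coincides with $(\phi\otimes\gamma)\circ\rho=\sigma\circ\phi$, so the two notions of morphism agree on the nose. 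Hence $K$ is an isomorphism and $G$ is comonadic. (Alternatively one could invoke the dual of Beck's theorem after establishing that $G$ reflects isomorphisms, but then the labour shifts to constructing equalizers of $G$-split pairs through the corestriction twisting, which is considerably less transparent than the direct identification of $\mathbb{G}$-coalgebras with comodules.)
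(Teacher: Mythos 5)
Your proposal is correct and follows essentially the same route as the paper: construct the right adjoint $(M,C)\mapsto(M\otimes C)_C$ via the cofree-comodule adjunction, compute the induced comonad $(M,C)\mapsto(M\otimes C,C)$ with counit $(1\otimes\epsilon,1)$ and comultiplication $(1\otimes\Delta,1)$, and identify its Eilenberg--Moore coalgebras and their morphisms directly with comodules and corestricted comodule maps. Your explicit check that the $\mathbb{G}$-coalgebra morphism condition $(\phi\otimes\gamma)\circ\rho=\sigma\circ\phi$ matches the twisted condition for a map $\gamma_*M\to N$ is exactly the point the paper asserts, and it is handled correctly.
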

\begin{proof}
Define the functor $H:\Mod_R\times
\Coalg_R\to\Comod$ by 
$(V,D)\mapsto (V\otimes D)_D$ on objects and 
$\xymatrix @C=1in @R=.2in {(V,D)\ar @{|->} [r] \ar[d]^-{(k,f)} &
(V\otimes D)_D\ar[d]^-{(k\otimes f,f)}\\
(W,E)\ar @{|->}[r] & (W\otimes E)_E}$
on arrows. Notice that $V\otimes D$ is a $D$-comodule 
via $V\otimes D\xrightarrow{1\otimes \Delta}V\otimes D\otimes D$, 
and also the linear map $k\otimes f$ is 
in fact an $E$-comodule arrow $f_*(V\otimes D)\to W\otimes E$, 
so the mappings are well-defined.
Then, this functor $H$ is the right adjoint of $G$, via 
the natural bijection
\begin{displaymath}
(\Mod_R\times\Coalg_R)((X,C),(V,D))
\cong\Mod_R(X,V)\times\Coalg_R(C,D)
\end{displaymath}
\begin{equation}\label{eq:bij}
\cong\Comod(X_C,(V\otimes D)_D)
\end{equation}
where $(X,C)=G(X_C)$ and $(V\otimes D)_D=H(V,D)$,
as shown below:\\
i) Given a pair of maps 
$\begin{cases}
k:X\to V &\text{in }\Mod_R\\
f:C\to D &\text{in }\Coalg_R
\end{cases}$,
we obtain the arrow $(\bar{k},f):X_C\to (V\otimes D)_D$
in $\Comod$, where $\bar{k}$ as a linear map is
$X\xrightarrow{\delta}X\otimes C\xrightarrow{k\otimes f}
V\otimes D$.\\
ii) Given $X_C\xrightarrow{(l,g)}(V\otimes D)_D$ in $\Comod$, 
i.e.
$\begin{cases}
g_*X\xrightarrow{l} V\otimes D & \text{in }\Comod_D\\
\quad C\xrightarrow{g} D & \text{in }\Coalg_R
\end{cases}$, we get the pair of arrows
$(X\xrightarrow{l}V\otimes D
\xrightarrow{1\otimes\epsilon}V,C\xrightarrow{g}D)$
in $\Mod_R(X,V)\times\Coalg_R(C,D)$.\\
Moreover, these two directions are inverses to each other,
therefore the bijection (\ref{eq:bij}) is established and we 
obtain the adjunction 
$\xymatrix{\Comod\ar @<+.7ex>[r]^-{G}_-*-<5pt>{\text{\scriptsize{$\bot$}}} 
& \Mod_R\times\Coalg_R\ar @<+.7ex>[l]^-{H}}$.

Hence, a comonad is induced on $\Mod_R\times\Coalg_R$, given by the 
endofunctor $GH$ which maps $(V,D)$ to $(V\otimes D,D)$, the counit
$\varepsilon:GH\Rightarrow id$ with components 
$\varepsilon_{(V,D)}:(V\otimes D,D)\xrightarrow{(1\otimes\epsilon,1)}
(V,D)$ and the natural transformation $G\eta_H:GH\Rightarrow GHGH$ with
components 
$G\eta_{H(V,D)}:(V\otimes D,D)\xrightarrow{(1\otimes\Delta,1)}
(V\otimes D\otimes D,D).$
A coalgebra for this comonad $((V,D),\gamma)$ 
with $(V,D)\xrightarrow{\gamma}(V\otimes D,D)$
turns out to be exactly a $D$-comodule for each different
$(D,\Delta,\epsilon)\in\Coalg_R$, and also a coalgebra arrow 
$((V,D),\gamma)\xrightarrow{(k,f)}((W,E),\gamma')$ turns out to be the same as 
an $E$-comodule morphism $f_*V\to W$.
Hence, the category 
of $GH$-coalgebras $(\Mod_R\times\Coalg_R)^{GH}$ is $\Comod$. 
\end{proof} 

\begin{cor}\label{Comodcocom}
The category $\Comod$ is cocomplete, the functor
$G:\Comod\to\Mod_R
\times\Coalg_R$ creating all colimits.
\end{cor}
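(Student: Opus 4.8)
The plan is to deduce everything from the comonadicity of $G$ established in Proposition \ref{Gcomon}, together with the cocompleteness of the base $\Mod_R\times\Coalg_R$. The key tool is the dual of the standard theorem that the forgetful functor of an Eilenberg--Moore category of algebras creates all limits existing in the base: dually, the forgetful functor from the category of coalgebras of a comonad creates all colimits existing in the base. It is worth emphasising that this creation statement carries no preservation hypothesis on the comonad — the coalgebra structure map on a colimit is forced by the universal property — so no assumption that $GH$ or $(GH)^2$ preserves colimits is needed.

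First I would record that $\Mod_R\times\Coalg_R$ is cocomplete. Indeed $\Mod_R$ is cocomplete as recalled in Section \ref{Modcat}, and $\Coalg_R=\mathbf{Comon}(\Mod_R)$ is cocomplete as well, since for the admissible category $\Mod_R$ it is comonadic over $\Mod_R$ and locally presentable (Section \ref{admcats}). A product of two cocomplete categories is cocomplete, with colimits formed componentwise, so the base is cocomplete. Next, by Proposition \ref{Gcomon} the functor $G$ exhibits $\Comod$ as equivalent to the Eilenberg--Moore category $(\Mod_R\times\Coalg_R)^{GH}$ of coalgebras for the comonad $GH$. Applying the dual creation theorem to this comonad, the forgetful functor from the coalgebra category creates all colimits in $\Mod_R\times\Coalg_R$, hence all colimits. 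Transporting along the comonadic comparison equivalence — the composite of a colimit-creating functor with an equivalence again creates colimits — the functor $G$ itself creates all colimits; and since a functor creating colimits from a cocomplete category necessarily has cocomplete domain, $\Comod$ is cocomplete.

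The only delicate point is the precise formulation of the creation theorem in the comonadic setting: one must invoke the \emph{unconditional} creation of colimits by the forgetful functor of an Eilenberg--Moore coalgebra category, rather than a variant requiring the comonad to preserve the colimits in question. I would therefore state this dual theorem explicitly, or cite it, before applying it; once that is in place the corollary is immediate, and the alternative route via the explicit construction of colimits in $\Comod$ (mentioned just before the statement) becomes unnecessary.
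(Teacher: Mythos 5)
Your proposal is correct and is essentially the argument the paper intends: the corollary is deduced from the comonadicity of $G$ (Proposition \ref{Gcomon}) via the dual of the theorem that Eilenberg--Moore forgetful functors create limits, together with cocompleteness of $\Mod_R\times\Coalg_R$. Your explicit remark that the creation of colimits by the coalgebra forgetful functor is unconditional (no preservation hypothesis on the comonad) is a worthwhile clarification, but it does not change the route; the explicit construction of colimits that the paper sketches afterwards is, as you say, an independent alternative.
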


For the explicit construction of colimits in $\Comod$, 
consider a diagram $F:J\to\Comod$. 
Since $\Coalg_R$ is cocomplete, the diagram
$J\xrightarrow{F}\Comod\xrightarrow{U}\Coalg_R$
has a colimiting cocone
$(UF_j\xrightarrow{\tau_j} \colim(UF)\,/\,j\in J)$ with
$\xymatrix @R=.1in{UF_j\ar[r]^-{\tau_j}\ar[d]_-{UF\alpha} &
\colim(UF)\\ UF_{j'}\ar[ur]_-{\tau_{j'}}}$ commuting for any 
$j\xrightarrow{\alpha}j'$.
Define a new diagram $G:J\to\Comod_{\colim(UF)}$ by 
$\xymatrix @C=0.15in @R=.2in {j\ar @{|->}[r]\ar[d]_-{\alpha} &
**[r](\tau_j)_*F_j=(\tau_{j'})_*(UF\alpha)_*F_j.\ar[d]^-{(\tau_{j'})_*F\alpha}\\
j'\ar @{|->}[r] & (\tau_{j'})_*F_{j'}}$
Since $\Comod_{\colim(UF)}$ is cocomplete, the above diagram
has a colimiting cocone
$((\tau_j)_* F_j\xrightarrow{\sigma_j}\colim G\,/\,j\in J)$, 
and also $U\colim G=\colim(UF)$.  

To see that $\colim G$ is actually the colimit of the initial diagram $F$,
notice that $(F_j\xrightarrow{(\sigma_j,\tau_j)} \colim G\,/\,j\in J)$
is already a cocone in $\Comod$ (the comodules being over 
the obvious coalgebras), and
$\colim G$ has the respective universal property
in $\Comod$.

In a very similar way, we can define the
category $\Mod$ of all modules over all $R$-algebras for a
commutative ring $R$. A morphism in
$\Mod$ between an $A$-module $M$ and a $B$-module $N$ 
is defined as a pair $(m,f):M_A\to N_B$ with 
$\begin{cases}
M\xrightarrow{m}f^{\sharp}N & \text{in } \Mod_A\\
A\xrightarrow{f}B & \text{in } \Alg_R
\end{cases}$, using the restriction of scalars as seen in (\ref{eq:res}).
The evident `forgetful' functor in this case is
$V:\Mod\to\Alg_R$, mapping every module $M_A$ to its
$R$-algebra $A$. Dually to the above results, we get that
$\Mod$ is a complete symmetric monoidal category, 
with a functor $F:\Mod\to\Mod_R\times\Alg_R$ 
creating all
limits, and also the functor $V$ has the structure of a 
\emph{strict} monoidal
functor, since $V(M_A\otimes N_B)=A\otimes B=VM_A\otimes VN_B$.

\begin{rmk}\label{functHom} 
There is a functor
$\Hom(-,N_B):\Comod^\mathrm{op}\to\Mod$ between the 
categories described above, induced by the
bifunctor $\Hom:\Comod_C^\mathrm{op}
\times\Mod_B\to\Mod_{\Hom_R(C,B)}$ as described in 
(\ref{eq:hommod}). More precisely, it is the partial 
functor of
\begin{equation}\label{eq:defHom}
\xymatrix @R=0.05in
{\Hom:\Comod^\mathrm{op}\times\Mod\ar[r] & \Mod\qquad\qquad\qquad\quad \\
\qquad\qquad\qquad\quad(X_C,N_B)\ar @{|->}[r] & \Hom(X,N)_{\Hom_R(C,B)}}
\end{equation}
The diagram
\begin{equation}\label{eq:cont} 
\xymatrix @R=.5in
@C=1.3in{\Comod^{\mathrm{op}}\ar[r]^-{\scriptscriptstyle{\Hom(-,N_B)}}
\ar[d]_-{G^\mathrm{op}}
& \Mod\ar[d]^-{F}\\
\Mod_R^\mathrm{op}\times\Coalg_R^\mathrm{op}
\ar[r]^-{\scriptscriptstyle{\Hom_R(-,N)\times
\Hom_R(-,B)}} & \Mod_R\times\Alg_R}
\end{equation} 
commutes, the composite on the left side 
preserves all limits (by (\ref{eq:Hcont}) and Proposition \ref{Gcomon})
and the functor $F$ creates them. Hence $\Hom(-,N_B)$ is a 
continuous functor.
\end{rmk}

\section{The existence of the universal measuring comodule}\label{existmeascomod}

Similarly to the way the universal measuring coalgebra was defined
via the adjunction (\ref{eq:meascoal}), we now 
proceed to the definition of an object $Q(M,N)_{P(A,B)}$ in
$\Comod$ 
defined by a natural isomorphism
\begin{equation}\label{eq:comod} 
\Comod(X,Q(M,N))\cong
\Mod(M,\Hom(X,N)),
\end{equation} 
where $X=X_C$, $M=M_A$ and $\Hom(X,N)=\Hom(X,N)_{\Hom_R(C,B)}$.
Hence, we want to prove the existence of an adjunction
\begin{equation}\label{eq:comodadj}
\xymatrix @C=.6in
{\Comod \ar @<+.7ex>[r]^-{\Hom(-,N_B)^\mathrm{op}} 
 _-*-<5pt>{\text{\scriptsize{$\bot$}}} &
\Mod^\mathrm{op}\ar @<+.7ex>[l]^-{Q(-,N_B)}}
\end{equation}
where $\Hom(-,N_B)^\mathrm{op}$ is the opposite of the 
functor considered in Remark \ref{functHom}. 

We begin by considering a slightly different adjunction. First of all,
from the special case of (\ref{eq:meascoal}) 
for $C=P(A,B)$, we get the
correspondence
\begin{displaymath}
\Coalg_R(P(A,B),P(A,B))
\cong\Alg_R(A,\Hom_R(P(A,B),B)),
\end{displaymath} 
and of course the identity $1_{P(A,B)}$ corresponds
uniquely to the unit of the adjunction $\alpha:A\to \Hom_R(P(A,B),B)$. 
This arrow induces, via the restriction of scalars
$\alpha^\sharp:\Mod_{\Hom_R(P(A,B),B)}\to\Mod_A$,
a set of morphisms in the category $\Mod_A$,
namely $\Mod_A(M,\alpha^{\sharp}\Hom(Z,N))$, where $M$ is an
$A$-module, $Z$ is a $P(A,B)$-module and $N$ is a $B$-module. The
question then is whether there exists a functor
$Q$ and a natural isomorphism
\begin{equation}\label{eq:specialadj}
\Comod_{P(A,B)}(Z,Q(M,N))\cong
\Mod_A(M,\alpha^{\sharp}\Hom(Z,N)).
\end{equation} 
In other words, we are looking for the left adjoint of
the functor
\begin{displaymath} 
\alpha^{\sharp}\circ
\Hom(-,N):\Comod_{P(A,B)}^\mathrm{op}\to
\Mod_{\Hom_R(P(A,B),B)}\to \Mod_A.
\end{displaymath} 
Since the category $\Comod_C$ for any $R$-coalgebra
$C$ is comonadic over $\Mod_R$, co-wellpowered and 
has a generator (see Section \ref{Modcat}), the 
Special Adjoint Functor Theorem applies:
\begin{itemize}
\item $\Comod_{P(A,B)}^\mathrm{op}$ is complete.
\item $\Comod_{P(A,B)}^\mathrm{op}$ is well-powered.
\item $\Comod_{P(A,B)}^\mathrm{op}$ has a cogenerator.
\item The functor $\alpha^{\sharp}\circ \Hom(-,N)$ is continuous,
as a composite of continuous functors.
\end{itemize}
Therefore the functor 
$\alpha^{\sharp}\circ
\Hom(-,N)$ has a left adjoint 
$Q(-,N)^{\mathrm{op}}:\Mod_A\to
\Comod_{P(A,B)}^\mathrm{op}$  and (\ref{eq:specialadj}) holds. Moreover,
the naturality of the bijection in $Z$ and $M$ makes $Q$ into 
a bifunctor 
\begin{equation}\label{eq:defQ}
Q:\Mod_A^{\mathrm{op}}\times \Mod_B\to
\Comod_{P(A,B)}
\end{equation}
such that (\ref{eq:specialadj}) is natural in all three variables.
We claim that this bifunctor is in fact the one inducing (\ref{eq:comod}).
More precisely, we are going to show that this functor $Q(-,N)$, when 
regarded as $Q(-,N_B)$, is also the left 
adjoint we are after in (\ref{eq:comodadj}).

We will establish a correspondence
between elements of $\Comod(X,Q(M,N))$ and 
elements of
$\Mod(M,\Hom(X,N))$, with $M\in\Mod_A$,
$N\in\Mod_B$ and $X\in\Comod_C$. As we saw in the
previous section, an element of the right hand side 
$(l,f):M_A\to\Hom(X,N)_{\Hom_R(C,B)}$ in $\Mod$ is 
$\begin{cases}
M\xrightarrow{l}f^\sharp\Hom(X,N) &\text{in }\Mod_A\\
A\xrightarrow{f}\Hom_R(C,B) &\text{in }\Alg_R
\end{cases}$. This arrow $l$ is a linear map $M\to
\Hom_R(X,N)$, which commutes with the coaction of $A$ on both
$R$-modules, the second becoming such via restriction of scalars along
$f$. Explicitly, $l$ satisfies the commutativity of
\begin{displaymath} 
\xymatrix @R=.05in 
{A\otimes M\ar[r]^-{1\otimes
l}\ar[dd]_-{\mu} & A\otimes \Hom(X,N)\ar
@{-->}[dd]\ar[rd]^-{f\otimes 1} &\\ 
& & \Hom_R(C,B)\otimes
\Hom(X,N)\ar[dl]^-{\mu}\\ M\ar[r]^-{l} & \Hom(X,N) &}
\end{displaymath} 
where $\Hom_R(C,B)\otimes\Hom(X,N)
\xrightarrow{\mu}\Hom(X,N)$
is the canonical action on the 
$\Hom_R(C,B)$-module $\Hom(X,N)$, as 
described in (\ref{eq:hommod}). This diagram 
translates under the
adjunction $-\otimes C\dashv \Hom_R(C,-)$ for 
the adjunct of $l$, $M\otimes X\xrightarrow{\bar{l}}N$, to
\begin{displaymath} 
\xymatrix @C=0.3in @R=0.15in 
{A\otimes M\otimes
X\ar[r]^-{1\otimes l\otimes 1}\ar[ddd]_-{\mu\otimes 1} & A\otimes
\Hom(X,N)\otimes X\ar[r]^-{f\otimes 1\otimes 1} & \Hom_R(C,B)\otimes
\Hom(X,N)\otimes X\ar[d]^-{1\otimes 1\otimes 1\otimes\delta}\\ 
& & \Hom_R(C,B)\otimes \Hom(X,N)\otimes X\otimes C\ar[d]^-s\\ 
& & \Hom_R(C,B)\otimes C\otimes \Hom(X,N)\otimes
X\ar[d]^-{e\otimes e}\\ M\otimes X\ar[dr]^-{\bar{l}} & &
B\otimes N\ar[dl]^-{\mu}\\ 
& N &}
\end{displaymath} 
using (\ref{eq:lala}). This can also be written as
\begin{equation}\label{eq:impdiag1} 
\xymatrix @C=0.2in @R=0.15in
{A\otimes M\otimes X\ar[r]^-{\scriptscriptstyle{1\otimes
l\otimes\delta}} \ar[ddd]_-{\mu\otimes 1}& 
\drtwocell<\omit>{'(*)} A\otimes \Hom(X,N)\otimes
X\otimes C\ar[r]^-{\scriptscriptstyle{f\otimes 1}}\ar @{-->}
@/_2pc/[ddr] & \Hom_R(C,B)\otimes \Hom(X,N)\otimes X\otimes
C\ar[d]^-s\\ 
&& \Hom_R(C,B)\otimes C\otimes \Hom(X,N)\otimes
X\ar[d]^-{e\otimes 1\otimes 1}\\ 
&& B\otimes \Hom(X,N)\otimes
X\ar[d]^-{1\otimes e}\\
M\otimes X\ar[dr]_-{\bar{l}} && B\otimes
N\ar[dl]^-{\mu}\\ 
& N &}
\end{equation} 
The question is whether every element of the
left hand side of (\ref{eq:comod}), when the functor $Q$ is the same as in
(\ref{eq:defQ}), corresponds uniquely to a 
linear map with the properties of $l$ as described above.
Note that, by definition of $Q$, when $M_A,N_B\in\Comod$, 
$Q(M,N)$ is a $P(A,B)$-comodule.
So, a morphism $X_C\xrightarrow{(k,h)} Q(M,N)_{P(A,B)}$ in $\Comod$ is a pair 
$\begin{cases}
h_*X\xrightarrow{k}Q(M,N) &\text{in }\Comod_{P(A,B)}\\
\quad C\xrightarrow{h}P(A,B) &\text{in }\Coalg_R
\end{cases}$. 

By the adjunction (\ref{eq:meascoal}) defining
the universal measuring coalgebra, we already know that each one of 
these coalgebra maps $h$ can be written as $\hat{f}$ for a unique 
$f:A\to \Hom_R(C,B)$ in $\Alg_R$.

The key point is that, since  
$h_*X\equiv\hat{f}_*X$ becomes a $P(A,B)$-comodule
via corestriction of scalars along $\hat{f}$, the 
map $k:\hat{f}_*X\to Q(M,N)$ in $\Comod_{P(A,B)}$
is an element of the left hand side
of the special case adjunction (\ref{eq:specialadj}), 
for $Z=\hat{f}_*X$. Therefore it uniquely corresponds to some 
$t:M\to\alpha^{\sharp}\Hom(\hat{f}_*X,N)$ in $\Mod_A$. 
We will show that this $t$, which as a
linear map is $M\xrightarrow{t}\Hom_R(X,N)$, has the property 
described by the commutativity of (\ref{eq:impdiag1}) above,
and hence it is an element of the right hand side of (\ref{eq:comod}).

We first have to see in detail how 
$\alpha^{\sharp}\Hom(\hat{f}_*X,N)$
has the structure of an $A$-module, with underlying $R$-module
$\Hom_R(X,N)$. The $A$-action is given 
by
\begin{displaymath} 
\xymatrix @R=.15in 
{A\otimes
\Hom(X,N)\ar[rr]^-{\alpha\otimes 1}\ar @{-->}@/_/[ddrr]_-{\mu'} &&
\Hom_R(P(A,B),B)\otimes \Hom(X,N)\ar[d]^-{\Hom(\hat{f},1)\otimes 1} \\
&& \Hom_R(C,B)\otimes \Hom(X,N)\ar[d]^-{\mu} \\ 
&& \Hom(X,N)}
\end{displaymath} 
which corresponds under the adjunction $-\otimes C\dashv
\Hom_R(C,-)$ to
\begin{displaymath} 
\xymatrix @C=.25in @R=.15in
{A\otimes \Hom(X,N)\otimes
X\ar[rr]^-{\alpha\otimes1\otimes1}\ar @{~>}
@/_3pc/[ddddrr]_-{\bar{\mu'}} && \Hom_R(P(A,B),B)\otimes
\Hom(X,N)\otimes X\ar[d]^-{1\otimes1\otimes\delta} \\ & &
\Hom_R(P(A,B),B)\otimes \Hom(X,N)\otimes X\otimes
C\ar[d]^-{1\otimes1\otimes1\otimes\hat{f}}\\ 
& & \Hom_R(P(A,B),B)\otimes
\Hom(X,N)\otimes X\otimes P(A,B)\ar[d]^-{1\otimes s}\\ & &
\Hom_R(P(A,B),B)\otimes P(A,B)\otimes \Hom(X,N)\otimes
X\ar[d]^{e\otimes e}\\ 
& & N.}
\end{displaymath} 
So, the regular diagram which the linear map 
$t:M\to\Hom_R(X,N)$ as an $A$-module map
has to satisfy (see (\ref{eq:defmod2}))
\begin{displaymath} 
\xymatrix {A\otimes M\ar[r]^-{1\otimes
t}\ar[d]_-{\mu} & A\otimes \Hom(X,N)\ar[d]^-{\mu'}\\ 
M\ar[r]^-t & \Hom(X,N)}
\end{displaymath} 
corresponds under $-\otimes C\dashv \Hom_R(C,-)$,
for its adjunct $\bar{t}:M\otimes X\to N$, to
\begin{equation}\label{eq:impdiag2} 
\xymatrix @C=0.05in @R=0.15in 
{A\otimes \Hom(X,N)\otimes X\otimes C
\ar[rr]^-{\alpha\otimes1}\ar
@{-->} @/_7ex/[dddrr] & \ddtwocell<\omit>{'(**)}
& \Hom(P(A,B),B)\otimes \Hom(X,N)\otimes X\otimes
C\ar[d]^-{1\otimes1\otimes1\otimes\hat{f}}\\ 
&& \Hom(P(A,B),B)\otimes
\Hom(X,N)\otimes X\otimes P(A,B)\ar[d]^-{1\otimes s}\\ 
A\otimes M\otimes X\qquad\qquad\qquad\quad
\ar @<+5ex>@/_/[uu]^-{1\otimes t\otimes\delta}
\ar @<-5ex>@/^/[dd]_-{\mu\otimes1} &&
\Hom(P(A,B),B)\otimes P(A,B)\otimes \Hom(X,N)\otimes
X\ar[d]^-{e\otimes1\otimes1}\\ 
&& B\otimes \Hom(X,N)\otimes
X\ar[d]^-{1\otimes e}\\ 
M\otimes X\quad\ar[dr]_-{\bar{t}} && B\otimes
N\ar[dl]_-{\mu}\\ 
& N &}
\end{equation}
Hence, in order for the linear map $t:M\to
\Hom_R(X,N)$ to be an element of the right hand side of
(\ref{eq:comod}), like the map $l$ described earlier, 
we just have to show that the 
diagrams (\ref{eq:impdiag1}) and
(\ref{eq:impdiag2}) are actually the same. By inspection
of the two diagrams, we see that it suffices to show
that the parts (*) and (**) are the same.
In other words, since the factor
$\Hom(X,N)\otimes X$ remains unchanged, the diagram
\begin{displaymath}
\xymatrix @C=.25in
{& \Hom_R(P(A,B),B)\otimes
C\ar[r]^-{1\otimes\hat{f}} & \Hom_R(P(A,B),B)\otimes
P(A,B)\ar[dr]^-e &\\ 
A\otimes C\ar[ur]^-{\alpha\otimes
1}\ar[drr]_-{f\otimes 1} && & B\\ 
&& \Hom_R(C,B)\otimes
C\ar[ur]_-e &}
\end{displaymath} 
must commute. But this is satisfied by Lemma \ref{lemma}, 
so the proof is complete.

To summarise, for the arbitrary element of 
$\Comod(X_C,Q(M,N)_{P(A,B)})$
that we started with, $\begin{cases}
h_*X\xrightarrow{k}Q(M,N) &\text{in }\Comod_{P(A,B)}\\
\quad C\xrightarrow{h}P(A,B) &\text{in }\Coalg_R
\end{cases}$, we already knew that $h$ corresponds uniquely
to some algebra map $f:A\to\Hom_R(C,B)$, and moreover we 
showed that $k$ corresponds uniquely to some 
$t:M\to\alpha^\sharp\Hom(\hat{f}_*X,N)$ in $\Mod_A$, which 
is a linear map $M\to\Hom_R(X,N)$ with exactly the same 
properties as an $A$-module map $M\to f^\sharp\Hom(X,N)$.
Therefore we established a (natural) correspondance
\begin{displaymath} 
\Comod(X,Q(M,N))\cong
\Mod(M,\Hom(X,N))
\end{displaymath} 
which gives the adjunction (\ref{eq:comodadj}). Notice 
that the bifunctor $Q$, defined initially as in (\ref{eq:defQ}), is used
in this context as
\begin{equation}\label{eq:defQ2}
\xymatrix @R=0.05in
{Q:\Mod^\mathrm{op}\times\Mod\ar[r] & \Comod\qquad\quad \\
\qquad\quad\quad(M_A,N_B)\ar @{|->}[r] & Q(M,N)_{P(A,B)}}
\end{equation} 
We call the object $Q(M,N)$ the \emph{universal measuring comodule}. 
As mentioned earlier, $Q(M_A,N_B)$ is a $P(A,B)$-comodule by construction.

\begin{rmk} Using the established adjunctions 
$\Hom(-,N_B)^\mathrm{op}\dashv Q(-,N_B)$ 
and $\Hom_R(-,B)^\mathrm{op}\dashv P(-,B)$,
we have a diagram of categories and functors
\begin{displaymath} 
\xymatrix @R=.5in @C=.75in 
{\Comod \ar @<+.7ex>[r]^-{\Hom(-,N_B)^\mathrm{op}}
_-*-<5pt>{\text{\scriptsize{$\bot$}}}
\ar[d]_-{U} &
\Mod^\mathrm{op}\ar @<+.7ex>[l]^-{Q(-,N_B)}
\ar[d]^-{V^\mathrm{op}} \\ 
\Coalg_R \ar @<+.7ex>[r]^-{\Hom_R(-,B)^\mathrm{op}} 
_-*-<5pt>{\text{\scriptsize{$\bot$}}} &
\Alg_R^\mathrm{op} \ar @<+.7ex>[l]^-{P(-,B)}}
\end{displaymath} 
where the squares of right adjoints/left adjoints 
respectively serially commute.
\end{rmk}

\section{Enrichment of modules in comodules}\label{enrichmmodscomods}

In a similar way to Section \ref{enrichmalgscoalgs}, we will see how the bifunctor 
$Q:\Mod^\mathrm{op}\times\Mod\to
\Comod$ induces an enrichment of the category
$\Mod$ in the symmetric monoidal category $\Comod$.
We will again use Proposition \ref{actionenrich} in the forms of Corollaries \ref{importcor1}
and \ref{importcor2},
for $\ca{C}=\Comod$ and $\ca{A}=\Mod$. 

Consider the bifunctor
$\Hom:\Comod^{\mathrm{op}}\times
\Mod\to\Mod$ as  in (\ref{eq:defHom}). In the 
previous section we showed the existence of an adjunction
$\xymatrix @C=.6in
{\Comod \ar @<+.7ex>[r]^-{\Hom(-,N_B)^{\mathrm{op}}} 
 _-*-<5pt>{\text{\scriptsize{$\bot$}}} &
\Mod^{\mathrm{op}}\ar @<+.7ex>[l]^-{Q(-,N_B)}}$. 
So again, for the conditions of Proposition {\ref{actionenrich}
to be satisfied, we only need to show that the functor $\Hom=H$ in this case 
is an action of the monoidal $\Comod^\mathrm{op}$ on $\Mod$.
We have to find two natural isomorphisms $\alpha_{XYN}:\Hom(X\otimes
Y,N)\xrightarrow{\sim}\Hom(X,\Hom(Y,N))$ and $\lambda_N:\Hom(R,N)
\xrightarrow{\sim}N$ in
$\Mod$ such that diagrams (\ref{eq:diag1}), (\ref{eq:diag2})
and (\ref{eq:diag3}) commute. We already know that these isomorphisms
exist in $\Mod_R$ as linear maps, so we have to make sure
that they can be lifted to isomorphisms in $\Mod$, as explained below.

The first isomorphism in $\Mod_R$, for $X\in\Comod_C$, 
$Y\in\Comod_D$ and $N\in\Mod_A$, is as a linear map
\begin{displaymath} 
\xymatrix @R=.05in
{k:\Hom_R(X,\Hom_R(Y,N))\ar[r] & \Hom_R(X\otimes
Y,N)\quad\\ \qquad f:X\to \Hom_R(Y,N)\ar @{|->}[r] &
[k(f)]:X\otimes Y\to N\\ & [k(f)](x\otimes y):=[f(x)](y)}
\end{displaymath}
The domain is a $\Hom_R(C,\Hom_R(D,A))$-module
and the codomain is a $\Hom_R(C\otimes D,A)$-module in 
a canonical way.
As mentioned in Section \ref{enrichmalgscoalgs}, there is an
isomorphism $\beta:\Hom_R(C,\Hom_R(D,A))\to\Hom_R(C\otimes D,A)$ in 
$\Alg_R$. Then, it can be seen that $k$ commutes with the 
$\Hom_R(C,\Hom_R(D,A))$-actions on both modules, the first one 
on $\Hom_R(X,\Hom_R(Y,N))$ being the canonical 
(see (\ref{eq:lala}))
and the second on $\Hom_R(X\otimes Y,N)$ induced via restriction
of scalars along $\beta$. 
Therefore, we obtain an isomorphism in $\Mod$, given by
$$\begin{cases}
\Hom(X,\Hom(Y,N))\xrightarrow{m}\beta^\sharp\Hom(X\otimes Y,N) 
& \text{in }\Mod_{\Hom_R(C,\Hom_R(D,A))}\\
\Hom_R(C,\Hom_R(D,A))\xrightarrow{\beta}\Hom_R(C\otimes D,A)
& \text{in }\Alg_R
\end{cases}.$$ 

With similar calculations, we obtain the second isomorphism $\lambda$
in $\Mod$, and the diagrams (\ref{eq:diag1}), (\ref{eq:diag2}) and 
(\ref{eq:diag3}) commute because they do for all $R$-modules.
Hence, the bifunctor $\Hom$ is an action, and so its opposite
$\Hom^\mathrm{op}:\Comod\times\Mod^\mathrm{op}\to\Mod^\mathrm{op}$
is also an action of the symmetric monoidal category $\Comod$ on the 
category $\Mod^\mathrm{op}$.

Now the assumptions of Corollaries \ref{importcor1}
and \ref{importcor2} hold, so we obtain
the following results.
\begin{prop}
The category $\Mod^\mathrm{op}$ is enriched in $\Comod$, with hom-objects
$\Mod^\mathrm{op}(M,N)=Q(N,M)$.
\end{prop}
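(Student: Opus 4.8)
The plan is to obtain the statement as an immediate instance of Corollary \ref{importcor1}, taking $\ca{C}=\Comod$ and $\ca{A}=\Mod$, so that $\ca{A}^{\mathrm{op}}=\Mod^{\mathrm{op}}$. Both hypotheses of that corollary have already been secured in the discussion preceding the statement. First, the bifunctor $\Hom$ of (\ref{eq:defHom}) has been shown to be an action of the monoidal category $\Comod^{\mathrm{op}}$ on $\Mod$; passing to opposites, $\Hom^{\mathrm{op}}\colon\Comod\times\Mod^{\mathrm{op}}\to\Mod^{\mathrm{op}}$ is an action of the symmetric monoidal category $\Comod$ on $\Mod^{\mathrm{op}}$, so it supplies exactly the associativity and unit isomorphisms controlled by the coherence diagrams (\ref{eq:diag1})--(\ref{eq:diag3}). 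Second, Section \ref{existmeascomod} produced, for each object $N_B$, the adjunction $\Hom(-,N_B)^{\mathrm{op}}\dashv Q(-,N_B)$ of (\ref{eq:comodadj}); this is precisely the adjunction $H(-,B)\dashv P(-,B)$ demanded by the corollary, once one identifies the corollary's parametrized right adjoint $P$ with our bifunctor $Q$.

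With these identifications, Corollary \ref{importcor1} applies verbatim and yields a $\Comod$-enrichment of $\Mod^{\mathrm{op}}$ whose hom-objects are $\ca{A}^{\mathrm{op}}(A,B)=P(B,A)$ in the corollary's notation. Substituting $P=Q$ and renaming the objects gives $\Mod^{\mathrm{op}}(M,N)=Q(N,M)$, as claimed; as a byproduct, $\Hom(X,N)$ is recognised as the enriched tensor product of $X\in\Comod$ and $N\in\Mod^{\mathrm{op}}$.

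If one prefers to exhibit the content rather than merely cite the corollary, the composition and identity maps come from the general recipe of Proposition \ref{actionenrich}: the identity $j_M\colon R\to Q(M,M)$ and the composition $Q(P,N)\otimes Q(N,M)\to Q(P,M)$ in $\Comod$ are obtained by transposing, across the adjunction (\ref{eq:comod}), suitable composites built from the counits of that adjunction together with the action coherence isomorphisms. The associativity and unit axioms of a $\Comod$-category then reduce, by naturality of (\ref{eq:comod}) together with (\ref{eq:diag1})--(\ref{eq:diag3}), to a diagram chase formally identical to the one establishing Proposition \ref{actionenrich}.

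The main point is that no genuine obstacle remains at this stage: all of the substantive work---namely the existence of the measuring comodule $Q(M,N)$ in Section \ref{existmeascomod} and the verification that $\Hom$ is an action---has already been completed. The only care required is the bookkeeping of variances, that is, tracking which slot of $Q$ is contravariant and confirming that the corollary's $P(B,A)$ translates to $Q(N,M)$ rather than $Q(M,N)$.
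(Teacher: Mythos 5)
Your overall strategy is exactly the paper's: instantiate Corollary \ref{importcor1} with $\ca{C}=\Comod$, $\ca{A}=\Mod$, feed it the adjunction $\Hom(-,N_B)^{\mathrm{op}}\dashv Q(-,N_B)$ from Section \ref{existmeascomod}, and read off the hom-objects as $Q(N,M)$ with the variance bookkeeping you describe. However, there is a gap at the one step that carries the actual content: you assert that the bifunctor $\Hom$ of (\ref{eq:defHom}) ``has been shown'' to be an action of $\Comod^{\mathrm{op}}$ on $\Mod$, but nothing prior to this proposition establishes that. What is known beforehand is only that $\Hom_R$ is an action of $\Mod_R^{\mathrm{op}}$ on $\Mod_R$ (Remark \ref{actionHom}) and that $\Hom_R$ restricts to an action of $\Coalg_R^{\mathrm{op}}$ on $\Alg_R$ (Section \ref{enrichmalgscoalgs}); neither of these is the statement needed here, which concerns the \emph{global} categories.

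Concretely, the missing verification is that the $R$-linear isomorphisms $\alpha_{XYN}\colon\Hom_R(X\otimes Y,N)\to\Hom_R(X,\Hom_R(Y,N))$ and $\lambda_N\colon\Hom_R(R,N)\to N$ lift to isomorphisms in $\Mod$. A morphism in $\Mod$ is a pair consisting of an algebra map and a module map over it via restriction of scalars, so one must exhibit the algebra isomorphism $\beta\colon\Hom_R(C,\Hom_R(D,A))\to\Hom_R(C\otimes D,A)$ and check that $\alpha_{XYN}$ intertwines the canonical $\Hom_R(C,\Hom_R(D,A))$-action on $\Hom_R(X,\Hom_R(Y,N))$ of (\ref{eq:lala}) with the action on $\Hom_R(X\otimes Y,N)$ obtained by restriction along $\beta$ (and similarly for $\lambda$). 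This equivariance check is the substantive work of the paper's proof; once it is done, the coherence diagrams (\ref{eq:diag1})--(\ref{eq:diag3}) do commute for free because they commute in $\Mod_R$ and the forgetful functor to $\Mod_R\times\Alg_R$ is faithful enough to detect this. Your proof would be complete, and identical to the paper's, with this verification supplied rather than presupposed.
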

\begin{prop}
The category $\Mod$ is enriched in $\Comod$, with hom-objects
$\Mod(M,N)=Q(M,N)$.
\end{prop}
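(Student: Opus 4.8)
The plan is to apply Corollary \ref{importcor2} directly, taking $\ca{C}=\Comod$ and $\ca{A}=\Mod$, since by this point every hypothesis it requires has been secured. First, $\Comod$ is symmetric monoidal, as shown in Section \ref{catsModComod}; the symmetry is exactly the feature that Corollary \ref{importcor2} exploits. Second, the discussion immediately preceding this statement exhibits $\Hom^\mathrm{op}:\Comod\times\Mod^\mathrm{op}\to\Mod^\mathrm{op}$ as an action of $\Comod$ on $\Mod^\mathrm{op}$, with associativity and unit constraints $\alpha$ and $\lambda$ obtained by lifting the corresponding $\Mod_R$-isomorphisms to $\Mod$, and with the coherence diagrams (\ref{eq:diag1})--(\ref{eq:diag3}) commuting because they already commute in $\Mod_R$. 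Third, Section \ref{existmeascomod} supplies, for each object $N_B$, the adjunction $\Hom(-,N_B)^\mathrm{op}\dashv Q(-,N_B)$ of (\ref{eq:comodadj}).

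Feeding these three ingredients into Corollary \ref{importcor1} first produces the enrichment of $\Mod^\mathrm{op}$ in $\Comod$ with hom-objects $\Mod^\mathrm{op}(M,N)=Q(N,M)$, which is the preceding Proposition. I would then invoke the symmetry clause, Corollary \ref{importcor2}, to transport this enrichment across the opposite and so obtain an enrichment of $\Mod=(\Mod^\mathrm{op})^\mathrm{op}$; the composition morphisms and identity elements guaranteed by Proposition \ref{actionenrich} are carried along by the symmetry of $\Comod$. Reading off the hom-objects then gives $\Mod(M,N)=\Mod^\mathrm{op}(N,M)=Q(M,N)$, as claimed.

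No substantive obstacle remains at this final step: the real work was completed earlier, in constructing the measuring comodule $Q(M,N)$ in Section \ref{existmeascomod} and in verifying that $\Hom$ inherits the action structure of $\Hom_R$ up from $\Mod_R$. The only point demanding a moment's attention is the bookkeeping of variance---confirming that flipping the arguments in the $\Mod^\mathrm{op}$-hom-objects $Q(N,M)$ yields precisely $Q(M,N)$---which is immediate once the symmetry of $\Comod$ has been brought to bear.
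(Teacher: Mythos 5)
Your proposal is correct and follows essentially the same route as the paper: verify that $\Hom$ (equivalently $\Hom^\mathrm{op}$) is an action by lifting the $\Mod_R$-isomorphisms and coherence diagrams, combine this with the adjunction $\Hom(-,N_B)^\mathrm{op}\dashv Q(-,N_B)$ from Section \ref{existmeascomod}, and apply Corollaries \ref{importcor1} and \ref{importcor2} in turn. The variance bookkeeping $\Mod(M,N)=\Mod^\mathrm{op}(N,M)=Q(M,N)$ is also exactly as in the paper.
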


\section{The category of comodules revisited}\label{catComodrevisit}

When we proved the existence of the bifunctor $Q:\Mod^\mathrm{op}\times
\Mod\to\Comod$ as the (parametrized) adjoint of $\Hom^\mathrm{op}:\Comod
\times\Mod^\mathrm{op}\to\Mod^\mathrm{op}$, we first 
considered a `special case' of this adjunction (see (\ref{eq:specialadj})) between
the specific fibers $\Comod_{P(A,B)}$ and $\Mod_A$, and then we used it to 
prove the more general adjunction between $\Comod$ and $\Mod$. Using 
a very similar idea, we can show that the symmetric monoidal category
$\Comod$ is monoidal closed. 

Recall that, by Proposition \ref{coalgmonclosed}, $\Coalg_R$ is a monoidal closed category, 
via the adjunction
\begin{equation}\label{eq:inthomc}
\xymatrix @C=.5in
{\Coalg_R\ar @<+.7ex>[r]^-{-\otimes D} 
_-*-<5pt>{\text{\scriptsize{$\bot$}}} &
\Coalg_R.\ar @<+.7ex>[l]^-{[D,-]_c}}
\end{equation}
The identity morphism $1_{[D,E]}$ corresponds uniquely to the counit of
this adjunction $\varepsilon_E:[D,E]_c\otimes D\to E$, and this arrow 
induces the corestriction of scalars 
$\varepsilon_*:\Comod_{[D,E]_c \otimes E}\to\Comod_E$ between 
the respective fibers. So we can again
consider the existence of a `special case' adjunction: for $Y\in\Comod_D$, the functor 
\begin{displaymath} 
\varepsilon_*\circ (-\otimes
Y):\Comod_{[D,E]_c}\to\Comod_{[D,E]_c\otimes E}\to
\Comod_E
\end{displaymath} 
has a right adjoint $H(Y,-):\Comod_E\to\Comod_{[D,E]_c}$, 
since again $\Comod_{[D,E]_c}$ is cocomplete, co-wellpowered, has a generator
and both $\varepsilon_*$ and $(-\otimes Y)$ preserve colimits. 
We obtain a natural isomorphism
\begin{equation}\label{eq:sth2} 
\Comod_E(\varepsilon_*(W\otimes Y),Z)\cong
\Comod_{[D,E]_c}(W,H(Y,Z))
\end{equation}
where $W\in\Comod_{[D,E]_c}$, $Y\in\Comod_D$ and $Z\in\Comod_E$.

In order to prove that $\Comod$ is monoidal closed, we are after
a more general adjunction $\xymatrix 
@C=.5in {\Comod\ar @<+.7ex>[r]^-{-\otimes Y_D} 
_-*-<5pt>{\text{\scriptsize{$\bot$}}} &
\Comod\ar @<+.7ex>[l]^-{H(Y_D,-)}}$ with a natural isomorphism
\begin{equation}\label{eq:inthomcom}
\Comod(X\otimes Y,Z)\cong
\Comod(X,H(Y,Z)).
\end{equation}
for $X\in\Comod_C$. As in Section \ref{existmeascomod}, we claim that the right adjoint $H$
of the adjunction (\ref{eq:sth2}), when seen as $H:\Comod^\mathrm{op}\times
\Comod\to\Comod$, is the same one inducing this bijection. 
So, in order to show the correspondance
between arrows 
\begin{displaymath}
\begin{cases}
f_*(X\otimes Y)\xrightarrow{k}Z &\text{in }\Comod_E\\
\quad\, C\otimes D\xrightarrow{f}E &\text{in }\Coalg_R
\end{cases}\quad
\mathrm{and}\quad 
\begin{cases}
\bar{f}_*X\xrightarrow{l}H(Y,Z) &\text{in }\Comod_{[D,E]_c}\\
\quad C\xrightarrow{\bar{f}}[D,E]_c &\text{in }\Coalg_R
\end{cases},
\end{displaymath} where $\bar{f}$ is the transpose of $f$ under (\ref{eq:inthomc}),
we can start from the right hand side and take the corresponding
\begin{displaymath}
\Big{\{}\bar{f}_*X\xrightarrow{l}H(Y,Z)\textrm{ in }\Comod_{[D,E]_c}\Big{\}}
\stackrel{(\ref{eq:sth2})}{\Longleftrightarrow}
\Big{\{}\varepsilon_*(\bar{f}_*X\otimes Y)\xrightarrow{m}Z\textrm{ in }\Comod_E\Big{\}},
\end{displaymath}
and the commutative diagram which expresses the fact that the linear map $m:X\otimes Y\to Z$
commutes with the induced $E$-action on $\varepsilon_*(\bar{f}_*X\otimes Y)$ 
turns out to be the same with the diagram expressing the fact that 
$m$ is a $E$-comodule map $f_*(X\otimes Y)\xrightarrow{m}Z$, 
using that $f$ and $\bar{f}$ are transpose maps.
Therefore (\ref{eq:inthomcom}) holds, and the bifunctor
\begin{equation}\label{eq:defH2}
\xymatrix @R=0.05in
{H:\Comod^\mathrm{op}\times\Comod\ar[r] & \Comod\qquad\quad \\
\qquad\qquad\qquad\quad(Y_D,Z_E)\ar @{|->}[r] & H(Y,Z)_{[D,E]_c}}
\end{equation}
is the internal hom of $\Comod$. Note that
$H(Y_D,Z_E)$ is a $[D,E]_c$-comodule by construction.
\begin{prop}
The global category of comodules $\Comod$ is a monoidal
closed category.
\end{prop}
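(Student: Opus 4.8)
The plan is to use that $\Comod$ was already shown to be symmetric monoidal in Section \ref{catsModComod}, so that it suffices to produce, for each object $Y_D$, a right adjoint $H(Y_D,-)$ to the tensoring functor $-\otimes Y_D:\Comod\to\Comod$, together with a natural isomorphism of the form (\ref{eq:inthomcom}); since the tensor is symmetric this single closure exhibits $\Comod$ as a monoidal closed category. Following the two-step pattern of Section \ref{existmeascomod}, I would first build the internal hom fiberwise and then promote the fiberwise adjunction to the global one.

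For the fiberwise step, fix $Y\in\Comod_D$ and coalgebras $D,E$, and consider the functor $\varepsilon_*\circ(-\otimes Y):\Comod_{[D,E]_c}\to\Comod_E$. Being the composite of the cocontinuous $-\otimes Y$ (cocontinuity as in (\ref{eq:sth})) with the cocontinuous corestriction $\varepsilon_*$, it preserves all colimits; since $\Comod_{[D,E]_c}$ is cocomplete, co-wellpowered and has a generator, the dual Special Adjoint Functor Theorem supplies a right adjoint $H(Y,-)$ and the natural isomorphism (\ref{eq:sth2}). This equips $H(Y_D,Z_E)$ with the structure of a $[D,E]_c$-comodule, giving the bifunctor (\ref{eq:defH2}).

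The crux is the globalizing step. I would set up the correspondence between a morphism $X_C\otimes Y_D\to Z_E$ in $\Comod$, that is a pair $(k,f)$ with $f:C\otimes D\to E$ in $\Coalg_R$ and $k:f_*(X\otimes Y)\to Z$ in $\Comod_E$, and a morphism $X_C\to H(Y,Z)_{[D,E]_c}$, that is a pair $(l,\bar f)$ with $\bar f:C\to[D,E]_c$ the transpose of $f$ under the monoidal closed adjunction (\ref{eq:inthomc}) of $\Coalg_R$ and $l:\bar f_*X\to H(Y,Z)$ in $\Comod_{[D,E]_c}$. On the coalgebra components the passage $f\leftrightarrow\bar f$ is precisely the adjunction of Proposition \ref{coalgmonclosed}, while on the comodule components it is (\ref{eq:sth2}) applied with $W=\bar f_*X$. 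The main obstacle is to verify that these two fiberwise bijections are compatible: concretely, that the commutative diagram expressing that the underlying linear map $m:X\otimes Y\to Z$ is an $E$-comodule map $f_*(X\otimes Y)\to Z$ coincides with the diagram expressing that $m$ respects the induced $E$-action on $\varepsilon_*(\bar f_*X\otimes Y)$. This is the exact analogue of the role played by Lemma \ref{lemma} in Section \ref{existmeascomod}, and I expect it to reduce, after unwinding the coactions and the counit $\varepsilon$, to the single identity that $f$ and $\bar f$ are transpose under (\ref{eq:inthomc}); the remaining factor involving $Y$, $Z$ and the corestrictions is unchanged on both sides.

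Finally, naturality of (\ref{eq:inthomcom}) in $X$ and $Z$ follows from the naturality of (\ref{eq:sth2}) and of the $\Coalg_R$ adjunction, and bifunctoriality of $H$ as in (\ref{eq:defH2}) is a routine consequence of the same naturality. Hence $-\otimes Y_D\dashv H(Y_D,-)$ for every $Y_D$, and $\Comod$ is monoidal closed.
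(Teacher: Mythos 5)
Your proposal follows essentially the same route as the paper: the fiberwise adjunction $\varepsilon_*\circ(-\otimes Y)\dashv H(Y,-)$ obtained from the Special Adjoint Functor Theorem using cocompleteness, co-wellpoweredness and a generator for $\Comod_{[D,E]_c}$, followed by the globalization step matching pairs $(k,f)$ with pairs $(l,\bar f)$ via the $\Coalg_R$ closure on coalgebra components and (\ref{eq:sth2}) on comodule components, with the compatibility of the two bijections reducing to $f$ and $\bar f$ being transposes under (\ref{eq:inthomc}). This matches the paper's argument, including its identification of the key diagram comparison as the analogue of the role played by Lemma \ref{lemma} in Section \ref{existmeascomod}.
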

The functor $-\otimes Y_D:\Comod\to\Comod$
whose right adjoint is the internal hom of $\Comod$, is 
evidently cocontinuous, by the commutativity of 
\begin{equation}\label{eq:tenY}
\xymatrix @C=.7in
{\Comod\ar[r]^-{-\otimes Y_D}\ar[d]_-{G} &
\Comod\ar[d]^-{G}\\
\Mod_R\times\Coalg_R\ar[r]^-{(-\otimes Y)\times(-\otimes D)} &
\Mod_R\otimes \Coalg_R}
\end{equation}
where $G$ is comonadic and the bottom arrow preserves colimits
since $\Mod_R$ and $\Coalg_R$ are monoidal closed.

\begin{rmk}
When $R=k$ is a field,
the existence of the universal 
measuring comodule and the internal hom in $\Comod$ become clearer. 
More precisely, rather than showing how the 
correspondences (\ref{eq:comod}), (\ref{eq:inthomcom}) are 
established explicitly, we can prove the existence of 
right adjoints of the functors $\Hom(-,N_B)^\mathrm{op}:\Comod\to
\Mod^\mathrm{op}$ and $-\otimes
Y_D:\Comod\to\Comod$ directly, using Theorem \ref{Kellythm} by Kelly.

We already know that $\Comod$ is a cocomplete category (Corollary \ref{Comodcocom}) and
the functors $\Hom(-,N_B)^\mathrm{op}$ and $-\otimes Y_D$ are cocontinuous 
(Remark \ref{functHom} and (\ref{eq:tenY})). If we can show that 
$\Comod$ has a small dense subcategory, both functors 
will automatically have right adjoints, establishing the existence of the functors $Q$ and $H$. 

Consider the category of all finite dimensional (as vector spaces)
comodules over finite dimensional coalgebras and linear maps between them, 
denoted $\ca{C}_{f.d.}$. The proof that $\ca{C}_{f.d.}$ is a 
(small) dense subcategory of
$\Comod$ makes use of the notion of the \emph{coefficients
coalgebra} Coeff($X$) over a finite dimensional $C$-comodule $X$ (see
\cite{MR2394705} as the space of matrix coefficients), which is the
smallest subcoalgebra of $C$ such that $X$ is a Coeff($X$)-comodule,
and even more it is a finite dimensional coalgebra.

\begin{proof}[Sketch of proof] For a fixed $X_C\in\Comod$, consider the category
\begin{displaymath} 
\Lambda=\{(V,D)\;\epsilon\;Sub_f(X)\times
Sub_f(C)/\; \mathrm{Coeff}(V)\subset D\}
\end{displaymath} 
where $Sub_f(C)$ is the preorder of finite
dimensional subcoalgebras of $C$ and $Sub_f(X)$ the
preorder of finite dimensional subcomodules of the comodule $X$. 
For each $(V,D)\in\Lambda$, since $V$
is a subcomodule of $X_C$, $V\in\Comod_{C}$, but we can
restrict the coaction to $D\subset C$ and we denote the $D$-comodule
$V_D$.  Then, we can define a diagram
\begin{displaymath} 
\xymatrix @R=.03in{\phi:\quad\;\Lambda\ar[r] &
\Comod\\ 
\quad (V,D)\ar @{|->}[r] & V_D\qquad}
\end{displaymath} 
and we have a cocone
$(V_D\xrightarrow{(\tau_{(V,D)},g)}X_C\,/\,(V,D)\in\Lambda)$
in $\Comod$, where $g$ is the
inclusion $g=\iota_D: D\hookrightarrow C$ in $\Coalg_k$ and
$\tau_{(V,D)}:(\iota_D)_*V_D\to X$ is again the inclusion
$j_V:V\hookrightarrow X$ in $\Comod_C$.

The Fundamental Theorem of Coalgebras and the
Fundamental Theorem of Comodules state that $C$ is the
colimit of its f.d. subcoalgebras and $X$ is the colimit of its
f.d. subcomodules, and then we can show that both $g$ and
$\tau_{(V,D)}$ are colimiting cocones in $\Coalg_k$ and
$\Comod_C$ respectively. Therefore
$(V_D\xrightarrow{(\tau_{(V,D)},g)}X_C\,/\,(V,D)\in\Lambda)$
is a colimiting cocone in $\Comod$.

In order to prove that $\ca{C}_{f.d}$ is dense in
$\Comod$, it suffices to show that the diagram
\begin{displaymath} 
\xymatrix {(I\downarrow X_C)\ar[r]^-P &
\ca{C}_{f.d}\; \ar @{^{(}->} [r]^-I & \Comod}
\end{displaymath} has $X_C$ as its canonical colimit, for any
$X_C\in\Comod$. For that, we form the diagram
\begin{displaymath} 
\xymatrix @R=.3in @C=.7in
{\Lambda\ar[r]^-F\ar[ddr]_-\phi & (I\downarrow X_C)\ar[d]_-{P}\\ 
& \ca{C}_{f.d.}\ar[d]^-I\\ & \Comod}
\end{displaymath} 
which commutes, and we can prove that the functor $F$ is
final, using epi-mono factorizations and finiteness properties of
coalgebras and comodules. We already know the colimiting 
cocone for the diagram $\phi$, therefore every comodule is the canonical
colimit of elements of $\ca{C}_{f.d.}$ and the proof is complete.
\end{proof}
\end{rmk}

\bibliographystyle{plain} \bibliography{myreferences}

\end{document}